\newtheorem{df}{Definition}[section]
\newtheorem{pr}[df]{Proposition}
\newtheorem{Th}[df]{Theorem}
\newtheorem{lm}[df]{Lemma}
\newtheorem{as}[df]{Assumption}
\begin{document}

	\title{\bf Long-time behavior of an Arc-shaped Vortex Filament and
	its Application to the Stability of a Circular Vortex Filament}
\author{Masashi A{\sc iki}}
\date{}
\maketitle
\vspace*{-0.5cm}

\begin{abstract}
We consider a nonlinear model equation, known as the 
Localized Induction Equation, describing the motion of a vortex filament
immersed in an incompressible and inviscid fluid. 
We show stability estimates for an arc-shaped vortex filament, 
which is an exact solution to an initial-boundary value problem for the 
Localized Induction Equation.
An arc-shaped filament travels along an axis at a constant speed
without changing its shape, and is
oriented in such a way that the arc stays in a plane that is
perpendicular to the axis. 
We prove that an arc-shaped filament is stable in the 
Lyapunov sense for general perturbations except in the axis-direction,
for which the perturbation can grow linearly in time.
We also show that this estimate is optimal.
We then apply the obtained stability estimates to study the 
stability of a circular vortex filament
under some symmetry assumptions on the initial perturbation.
We do this by dividing the circular filament into arcs, apply the stability
estimate to each arc-shaped filament, and combine the estimates to 
obtain estimates for the whole circle.
The optimality of the stability estimates
for an arc-shaped filament also shows that
a circular filament is not stable in the Lyapunov sense, namely, 
certain perturbations can grow linearly in time.
 
\end{abstract}

\section{Introduction and Problem Setting}

A vortex filament is a space curve on which the vorticity of the fluid is concentrated. 
Vortex filaments are used to model very thin vortex structures such as vortices that 
trail off airplane wings or propellers. 
Another common vortex structure modeled by a vortex filament is a 
vortex ring, which is a thin torus-shaped vortex structure, such as a
bubblering. A vortex ring would be described as a circular vortex filament.
In this paper, we consider a vortex filament immersed in an incompressible 
and inviscid fluid,
for which its motion can be modeled by the  
Localized Induction Equation given by 
\begin{align*}
\mbox{\mathversion{bold}$x$}_{t} =\mbox{\mathversion{bold}$x$}_{s}\times 
\mbox{\mathversion{bold}$x$}_{ss},
\end{align*}
where \( \mbox{\mathversion{bold}$x$}(s, t)= {}^{t}(x_{1}(s ,t), x_{2}(s ,t), x_{3}(s ,t))\) is the 
position vector of the vortex filament parametrized by 
its arc length \( s \) at time \( t\), 
\( \times \) is the exterior product in the three-dimensional Euclidean space,
and subscripts \( s\) and \(t \) are differentiations with the respective variables.
The 
Localized Induction Equation (LIE) is derived by applying the so-called localized induction approximation to the 
Biot--Savart integral, which gives the velocity field of the fluid from the vorticity field.
The LIE was first derived by 
Da Rios \cite{20} in 1906 and was re-derived twice independently by Murakami et al. \cite{22} in 1937 and by 
Arms and Hama \cite{21} in 1965.

In this paper, we consider the stability of an arc-shaped filament 
and a circular vortex filament, which are exact solutions of the LIE.
Before we go into the details of the specific filaments, 
we go over some backgrounds on the relevant problems.
An arc-shaped filament is an exact solution of the following
initial-boundary value problem for the LIE.
\begin{eqnarray}
\left\{
\begin{array}{ll}
\mbox{\mathversion{bold}$x$}_{t} =\mbox{\mathversion{bold}$x$}_{s}\times 
\mbox{\mathversion{bold}$x$}_{ss}, & s\in I_{L}, \ t>0, \\[3mm]
\mbox{\mathversion{bold}$x$}(s,0)=\mbox{\mathversion{bold}$x$}_{0}(s), & s\in I_{L}, \ t>0, \\[3mm]
\mbox{\mathversion{bold}$x$}_{s}(0,t)=\mbox{\mathversion{bold}$a$}, \ 
\mbox{\mathversion{bold}$x$}_{s}(L,t)=\mbox{\mathversion{bold}$e$}_{3}, & t>0,
\end{array}\right.
\label{slant}
\end{eqnarray}
where 
\( I_{L}=(0,L)\subset \mathbf{R}\) is an open interval, 
\( \mbox{\mathversion{bold}$a$}\in \mathbf{R}^{3}\) is an arbitrary vector satisfying 
\( |\mbox{\mathversion{bold}$a$}|=1 \), and \( \mbox{\mathversion{bold}$e$}_{3}={}^{t}(0,0,1) \).
Note that since the independent variable \( s\) is the arc length, 
\( L\) is determined by the length of the initial filament.
We can see that, by taking the trace \( s=0\) in the equation of problem
(\ref{slant}), a filament moving according to problem (\ref{slant}) satisfies
\begin{align*}
\mbox{\mathversion{bold}$x$}_{t}(0,t)&=\mbox{\mathversion{bold}$a$}\times \mbox{\mathversion{bold}$x$}_{ss}(0,t),
\end{align*}
which means that the end-point \( \mbox{\mathversion{bold}$x$}(0,t)\)
of the filament moves along the plane perpendicular to \( \mbox{\mathversion{bold}$a$}\).
Hence, problem (\ref{slant}) describes a vortex filament moving on a slanted plane, namely, 
the plane perpendicular to \( \mbox{\mathversion{bold}$a$} \). 
The reason we also imposed a boundary condition at \( s=L \) is for the following reason. 
A more intuitive problem setting for a vortex filament moving on a plane would be
\begin{eqnarray}
\left\{
\begin{array}{ll}
\mbox{\mathversion{bold}$x$}_{t} =\mbox{\mathversion{bold}$x$}_{s}\times 
\mbox{\mathversion{bold}$x$}_{ss}, & s>0, \ t>0, \\[3mm]
\mbox{\mathversion{bold}$x$}(s,0)=\mbox{\mathversion{bold}$x$}_{0}(s), & s>0, \ t>0, \\[3mm]
\mbox{\mathversion{bold}$x$}_{s}(0,t)=\mbox{\mathversion{bold}$a$}, & t>0,
\end{array}\right.
\label{hp}
\end{eqnarray}
which is a problem describing an infinitely long filament with one end moving along the plane perpendicular to 
\( \mbox{\mathversion{bold}$a$}\). The solvability of problem (\ref{hp}) is a direct consequence of 
a previous work by the author and Iguchi \cite{11}, which proved the solvability of problem 
(\ref{hp}) with \( \mbox{\mathversion{bold}$a$}=\mbox{\mathversion{bold}$e$}_{3} \), because the solution of
problem (\ref{hp}) can be obtained by rotating the solution obtained in \cite{11} in a way that 
\( \mbox{\mathversion{bold}$a$} \) is trasformed to \( \mbox{\mathversion{bold}$e$}_{3} \). Hence,
problem (\ref{hp}) for general \( \mbox{\mathversion{bold}$a$} \) is essentially the same as the case 
\( \mbox{\mathversion{bold}$a$}=\mbox{\mathversion{bold}$e$}_{3} \). 
So to describe the motion of a vortex filament on a slanted plane, we imposed a boundary condition at 
\( s=L \) to set a reference plane which allows us to express the slanted-ness of the plane that the 
filament is moving on.

The time-global solvability of problem (\ref{slant}) was proved by the author in \cite{23}.
Since the problem is invariant under rotation and translation, 
if \( \mbox{\mathversion{bold}$a$} \) and 
\( \mbox{\mathversion{bold}$e$}_{3}\) are linearly independent, we can always transform the problem to the following.
\begin{align}
\left\{
\begin{array}{ll}
\mbox{\mathversion{bold}$x$}_{t} =\mbox{\mathversion{bold}$x$}_{s}\times 
\mbox{\mathversion{bold}$x$}_{ss}, & s\in I_{L}, \ t>0, \\[3mm]
\mbox{\mathversion{bold}$x$}(s,0)=\mbox{\mathversion{bold}$x$}_{0}(s), & s\in I_{L}, \ t>0, \\[3mm]
\mbox{\mathversion{bold}$x$}_{s}(0,t)=\mbox{\mathversion{bold}$b$}, \ 
\mbox{\mathversion{bold}$x$}_{s}(L,t)=\mbox{\mathversion{bold}$e$}_{2}, & t>0,
\end{array}\right.
\label{slant2}
\end{align}
where \( \mbox{\mathversion{bold}$e$}_{2}={}^{t}(0,1,0)\), \( \mbox{\mathversion{bold}$b$}=
{}^{t}(b_{1},b_{2},0)\) satisfies \( |\mbox{\mathversion{bold}$b$}|=1 \), and is oriented so that 
the plane perpendicular to \( \mbox{\mathversion{bold}$b$}\) forms an angle \( \theta \)
 in the 
counter-clockwise direction, from the plane
perpendicular to \( \mbox{\mathversion{bold}$e$}_{2}\) for some 
\( \theta \in (0,2\pi )\). Formulated in this way, we can regard 
\( \theta \in (0,2\pi )\) as the given data instead of \( \mbox{\mathversion{bold}$b$}\)
since \( \theta \) will uniquely determine 
\( \mbox{\mathversion{bold}$b$}\) by 
\( \mbox{\mathversion{bold}$b$}={}^{t}(-\sin \theta , \cos \theta , 0 )\).
For convenience, we denote the plane at \( s=0\) as the lower plane, and the plane at \( s=L\) as the upper plane. We also set the origin of the Euclidean space at the intersection 
of the lower and upper plane such that the line created by this intersection coincides with 
the axis \( \{ {}^{t}(0,0,\xi_{3}) \ | \ \xi_{3} \in \mathbf{R} \}\). From here, \( \xi_{i} \ (i=1,2,3)\) will be used to  
denote the axis of the three-dimensional Euclidean space. We make this notation to avoid confusion with the 
components of the position vector of the filament.
In what follows, we consider
problem (\ref{slant2}) instead of problem (\ref{slant}) without loss of generality. 

Many mathematical research has been done on the LIE.
Nishiyama and Tani \cite{5,13} proved the unique solvability of the initial value problem in Sobolev 
spaces. Koiso \cite{12} considered a geometrically generalized setting in which he rigorously proved
the equivalence of the solvability of initial value problems for the LIE and the cubic nonlinear Schr\"odinger equation. 
This equivalence was first shown by 
Hasimoto \cite{14} in which he studied the formation of solitons on a vortex filament. He defined a transformation of variable known as the Hasimoto transformation to transform the LIE into the cubic
nonlinear Schr\"odinger equation. 
The Hasimoto transformation is a change of variable given by 
\[ \psi(s,t) = \kappa(s,t) \exp \left( {\rm i} \int ^{s}_{0} \tau(r,t) \,{\rm d}r \right), \]
where \( {\rm i}\) is the imaginary unit, \( \kappa \) is the curvature,
 and \( \tau \) is the torsion of the filament.
Defined as such, it is well known that \( \psi \) satisfies the 
nonlinear Schr\"odinger equation given by 
\begin{eqnarray}
{\rm i}\frac{\partial \psi}{\partial t}
= \frac{ \partial ^{2}\psi }{\partial s^{2}} + \frac{1}{2} \left| \psi \right| ^{2}\psi .
\label{NLS}
\end{eqnarray}
The original transformation proposed by Hasimoto uses the torsion of the filament in its definition,
which means that the transformation is undefined at points where the curvature of the filament is zero. 
Koiso \cite{12} constructed a transformation, sometimes referred to as the 
generalized Hasimoto transformation, and gave a mathematically rigorous 
proof of the equivalence of the solvability of initial value problem for the LIE and (\ref{NLS}). 

More recently, 
Banica and Vega \cite{15,16,25} and Guti\'errez, Rivas, and Vega \cite{17} constructed and analyzed
a family of self-similar solutions of the LIE which forms a corner in finite time. 
The authors \cite{11} proved the unique solvability of an initial-boundary value problem for the LIE
in which the filament moved in the three-dimensional half space. Nishiyama and Tani \cite{5} also
considered initial-boundary value problems with different boundary conditions. 

As far as the author knows, the series of papers \cite{15,16,25,90,17} are the only ones that 
obtain stability estimates for the position of the filament moving under the LIE.
Thanks to the result by Koiso \cite{12}, it is possible to transform the 
initial value problem for the LIE into the initial value problem for the
nonlinear Schr\"odinger equation, and approach the stability of a vortex filament from
the analysis of the nonlinear Schr\"odinger equation. As is well known, there are many
 results about the stability of 
specific solutions (for example, plane wave solutions and ground state solutions)
of the nonlinear Schr\"odinger equation, and although these are related to 
the stability of the motion of a vortex filament, we mention that obtaining stability estimates for the 
solution of the Schr\"odinger equation does not imply estimates for the position vector of the 
original filament. This is because, roughly speaking, estimates for the solution of the Schr\"odinger equation 
correspond to estimates of the curvature of the filament, and it is non-trivial
(if at all possible) to obtain estimates for the position of the filament from the estimates for the curvature. 
Indeed, in \cite{15,16,25,17}, they were able to obtain estimates for the position of the 
filament by utilizing the self-similar nature of the filament alongside estimates obtained from the 
analysis of the Schr\"odinger equation through the Hasimoto transformation.

The stability analysis in this paper is much more simple and direct
compared to previous results. We don't use the Hasimoto transformation,
and the estimates for the perturbation is obtained by utilizing 
conserved quantities and direct energy estimates for the perturbation,
as well as symmetries that are conserved throughout the motion of the 
filament.
The relation between problem (\ref{slant2}) and the corresponding problem for the
nonlinear Schr\"odinger equation is addressed by the author in more detail in \cite{40}.

\medskip

\medskip

We return to the problem at hand.
For \( R>0\), a particular exact solution of problem (\ref{slant2}) is given by 
\begin{align*}
\mbox{\mathversion{bold}$x$}^{R}(s,t) := {}^{t}\big( R\cos (\frac{s}{R}), R\sin (\frac{s}{R}), 
\frac{t}{R} \big),
\end{align*}
which is an arc-shaped filament with radius \( R\)
 spanned between the lower and upper plane travelling at a 
constant speed \( \frac{1}{R}\). The center of the arc is located at the 
intersection of the lower and upper planes.
We also set
\begin{align*}
\mbox{\mathversion{bold}$x$}^{R}_{0}(s) := {}^{t}\big( R\cos (\frac{s}{R}), R\sin (\frac{s}{R}), 
0 \big).
\end{align*}

We want to consider the stability of the above arc-shaped filament, and hence 
we consider the solution of problem (\ref{slant2}) with initial datum given by 
\(
 \mbox{\mathversion{bold}$x$}^{R}_{0}+ 
\mbox{\mathversion{bold}$\varphi $}_{0}
\),
where \( \mbox{\mathversion{bold}$\varphi $}_{0} \) is the initial perturbation, and 
look at the behavior of \( \mbox{\mathversion{bold}$\varphi $}(s,t) :=
\mbox{\mathversion{bold}$x$}(s,t)-\mbox{\mathversion{bold}$x$}^{R}(s,t) \).
We must first observe what kind of stability is suitable for our problem.
A simple counter-example shows that, regardless of how small we take the initial perturbation,
asymptotic stability doesn't hold, i.e. 
\( \mbox{\mathversion{bold}$\varphi $}\) doesn't vanish in general as \( t\to \infty \).
Namely, for any \( \varepsilon >0\), if we consider an initial perturbation of the form 
\( {}^{t}(0,0, \varepsilon )\), then the corresponding solution
\( \mbox{\mathversion{bold}$x$}(s,t)\) of (\ref{slant2}) is
explicitly given by
\begin{align*}
\mbox{\mathversion{bold}$x$}(s,t)=\mbox{\mathversion{bold}$x$}^{R}(s,t)
+
\mbox{\mathversion{bold}$\varphi$}_{0}(s),
\end{align*}
which is separated by a constant distance \( \varepsilon \) from 
\( \mbox{\mathversion{bold}$x$}^{R}(s,t) \) for all \( s\in I_{L} \) and \( t>0 \).
As we will show later, we can actually prove that for any initial perturbation,
\( \mbox{\mathversion{bold}$\varphi $} \) will not vanish (unless of course, the 
initial perturbation is a zero vector) as \( t\to \infty\). 
Hence we will focus on proving Lyapunov type 
stability for the arc-shaped filament. We summarize the problem setting in the 
following figure since a lot of what was explained above is 
easier to understand as a picture rather than words.

\begin{figure}[h] 
\begin{center}
\begin{overpic}[width=0.5\textwidth]{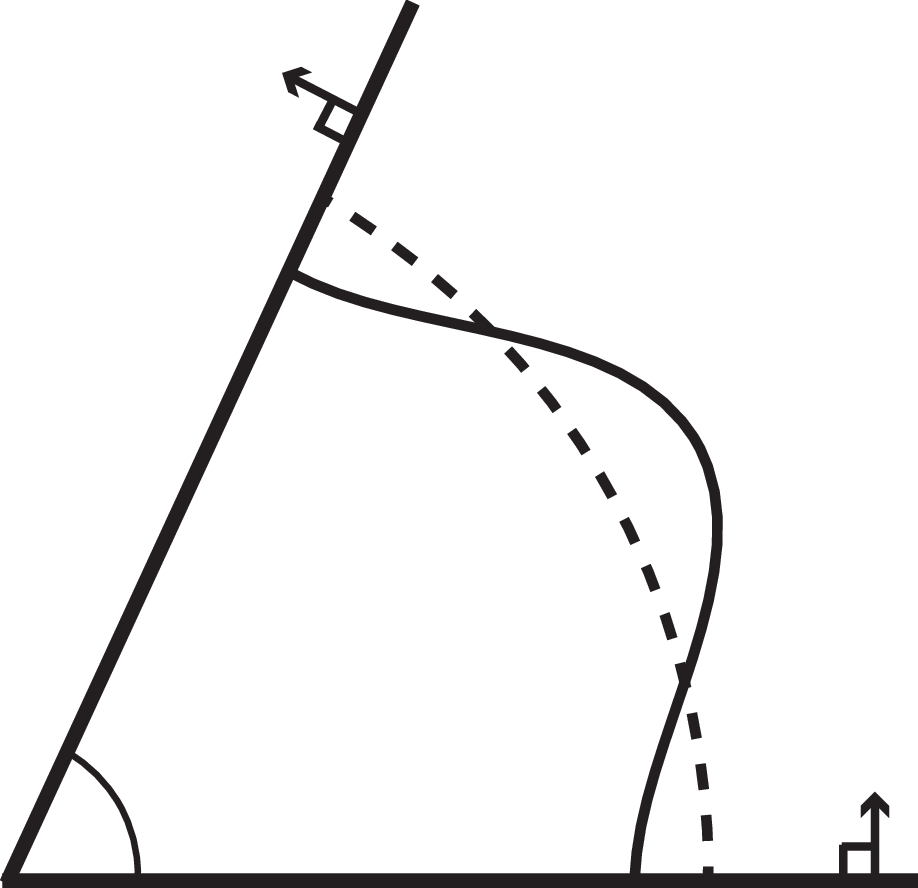}
\put(26.5,88){\( \mbox{\mathversion{bold}$b$}\) }
\put(93.5,12.5){\( \mbox{\mathversion{bold}$e$}_{2}\) }
\put(8,4){\( \theta \)}
\put(43,73){\( \mbox{\mathversion{bold}$x$}^{R}(s,t) \)}
\put(77,52){\( \mbox{\mathversion{bold}$x$}^{R}(s,t)
+ \mbox{\mathversion{bold}$\varphi $}(s,t) \)}
\put(3,45){upper}
\put(3,39){ plane}
\put(26,3.5){lower plane}
\end{overpic}
\end{center}
\caption{A schematic of the problem setting for the initial-boundary value problem (\ref{slant2})}
\end{figure}

\medskip

\medskip

We also consider the stability of a circular vortex filament,
which is an exact solution of the following initial value problem.
\begin{align}
\left\{
\begin{array}{ll}
\mbox{\mathversion{bold}$x$}_{t}=\mbox{\mathversion{bold}$x$}_{s}
\times \mbox{\mathversion{bold}$x$}_{ss}, & s\in \mathbf{T}, \ t>0, \\[3mm]
\mbox{\mathversion{bold}$x$}(s,0)=\mbox{\mathversion{bold}$x$}_{0}(s), & s\in \mathbf{T}, \ t>0,
\end{array}\right.
\label{closed}
\end{align}
where \( \mathbf{T}=\mathbf{R}/[0,L] \).
Problem \ref{closed} describes the motion of a closed vortex filament. 
A particular solution of problem \ref{closed} is a circular vortex filament
\( \mbox{\mathversion{bold}$x$}^{R}\) given by
\begin{align*}
\mbox{\mathversion{bold}$x$}^{R}(s,t)=
{}^{t}\big( R\cos (\frac{s}{R}), R\sin (\frac{s}{R}), 
\frac{t}{R} \big),
\end{align*}
where the initial filament \( \mbox{\mathversion{bold}$x$}^{R}_{0}\)
is given by 
\begin{align*}
\mbox{\mathversion{bold}$x$}^{R}_{0}(s)=
{}^{t}\big( R\cos (\frac{s}{R}), R\sin (\frac{s}{R}), 
0 \big).
\end{align*}
Note that the above notation is the same as the arc-shaped filament. 
Since the expression for both are the same, we will differentiate between the 
two through context rather than notation.

There is a long history of research 
\cite{80, 101, 61, 51, 55, 64, 57, 100, 77, 52, 50, 76, 
75, 59, 69, 54, 62, 53, 66, 73, 58, 60, 70, 67, 68, 65, 78, 72, 79} studying the motion and stability of 
vortex rings through various methods such as experiments, numerics, and theoretical 
analysis. 
Many of these researches focus on the instability,
linear stability, and orbital stability of the motion of vortex rings, under
various perturbations and deformations. 
Here, orbital stability of a vortex ring refers to the stability of the 
ring to perturbations up to a translation along the axis of the ring.
The complexity and rich nature of the motion of a vortex ring and how it 
relates to the motion of fluids in general has attracted many researchers,
and still is one of the most fundamental subject of research in fluid dynamics.
A mathematical treatment of the stability of a vortex ring
based on the axisymmetric Euler equation is
conducted by Choi \cite{79}, Cao, Qin, Zhan, and Zou \cite{72}, and
Choi and Jeong \cite{78}. In \cite{72}, they prove the orbital stability 
of a steady vortex ring introduced by Norbury \cite{51}.
Choi \cite{79} and Choi and Jeong \cite{78} prove that vortex rings with the core vortex distribution 
given by the Hill's vortex is orbitally stable. They also show estimates
for the size of the translational perturbation with respect to time.

Comparatively, a fewer number of research
\cite{21, 24, 81, 12, 82, 84} is available which study the 
LIE to investigate the motion of circular vortex filaments in detail. 
So far it is shown, in a mathematically rigorous context, that
circular vortex filaments and some closed vortex filaments are 
linearly stable under certain perturbations.
For instance, in Calini, Keith, and Lafortune \cite{82} and Ivey and Lafortune \cite{84},
they develop a technique and utilize it to prove that
closed vortex filaments are stable under the linearized LIE. 
As far as the author knows, there are no results proving the nonlinear
stability or instability of a circular vortex filament under the LIE.
We address this problem and prove stability estimates for a 
circular vortex filament under the LIE with some symmetry 
assumptions on the perturbation. The estimate we obtain 
admits a linear growth with respect to time in the axis direction, but we show that 
this is optimal.

\medskip

The contents of the rest of the paper are as follows.
In Section 2, we define function spaces and notations that are 
used throughout this paper. We also define compatibility conditions
and other necessary definitions before stating the main theorems.
In Section 3, we prove stability estimates for the arc-shaped filament.
We prove that an arc-shaped filament is stable in the 
Lyapunov sense, globaly in time, except in the \( \xi_{3}\)-direction.
Our estimates admit a linear growth with respect to \( t\) for the 
perturbation in the \( \xi_{3}\)-direction, and we also prove that 
this is optimal.
In Section 4, we utilize the results of Section 3 to prove 
stability estimates for a circular vortex filament. 
Under certain symmetry assumptions on the initial perturbation, 
we show that the initial value problem describing the 
motion of a perturbed circular vortex filament can be 
divided into segments, each being a solution of an
initial-boundary value problem describing the motion of a
perturbed arc-shaped filament. Hence, the results of 
Section 3 can be applied to each segment to obtain
estimates for the whole closed filament.
In Section 5, we give concluding remarks and compare our results to 
the analysis of a vortex ring.

%-----------------------------------------------------------------------------------
%-----------------------------------------------------------------------------------

\section{Function Spaces, Notations, and Main Theorem}
\setcounter{equation}{0}

We introduce some function spaces that will be used throughout this paper, and notations associated with the spaces.
Let \( I\subset \mathbf{R}\) be an open interval and 
\( \mathbf{T}=\mathbf{R}/[0,L]\). For \( \Omega = I \ \text{or} \ \mathbf{T}\),
a non-negative integer \( m\), and \( 1\leq p \leq \infty \), 
\( W^{m,p}(\Omega)\) 
is the Sobolev space 
containing all real-valued functions that have derivatives in the sense of distribution up to order \( m\) 
belonging to \( L^{p}(\Omega)\).
We set \( H^{m}(\Omega) := W^{m,2}(\Omega) \) as the 
Sobolev space equipped with the usual inner product, and 
set \( H^{1}_{0}(I) \) as the closure, with 
respect to the \( H^{1}\)-norm, of the set of smooth functions with compact support.
The norm in \( H^{m}(\Omega) \) is denoted by \( \| \cdot \|_{m} \) and we 
simply write \( \| \cdot \| \) 
for \( \|\cdot \|_{0} \). Otherwise, for a Banach space \( X\), the norm in \( X\) is written as \( \| \cdot \| _{X}\).
The inner product in \( L^{2}(\Omega)\) is denoted by \( (\cdot ,\cdot )\).

For \( 0<T \leq \infty \) and a Banach space \( X\), 
\( C^{m}([0,T];X) \)
( \( C^{m}\big( [0,\infty);X\big)\) when \( T= \infty \)),
denotes the space of functions that are \( m\) times continuously differentiable 
in \( t\) with respect to the norm of \( X\).
The space \( L^{\infty}\big( 0,\infty; X \big) \) denotes the space of functions 
that are bounded in \( t\) with respect to the norm of \( X\)

For any function space described above, we say that a vector valued function belongs to the function space 
if each of its components does.

\medskip

Next we define compatibility conditions for problem
(\ref{slant2}).
\begin{df}
(Compatibility conditions for {\rm (\ref{slant2})}).
For \( \mbox{\mathversion{bold}$x$}_{0}\in H^{2}(I_{L})\), we say that 
\( \mbox{\mathversion{bold}$x$}_{0}\) satisfies the \( 0\)-th order compatibility condition
for {\rm (\ref{slant2})} if
\begin{align*}
\mbox{\mathversion{bold}$x$}_{0s}(0)=\mbox{\mathversion{bold}$b$}, \quad 
\mbox{\mathversion{bold}$x$}_{0s}(L)=\mbox{\mathversion{bold}$e$}_{2},
\end{align*}
are satisfied.
For \( \mbox{\mathversion{bold}$x$}_{0}\in H^{4}(I_{L})\), we say that 
\( \mbox{\mathversion{bold}$x$}_{0}\) satisfies the \( 1\)-st order compatibility condition 
for {\rm (\ref{slant2})} if
\begin{align*}
\mbox{\mathversion{bold}$x$}_{0s}\times \mbox{\mathversion{bold}$x$}_{0sss}\big|_{s=0}
=
\mbox{\mathversion{bold}$x$}_{0s}\times \mbox{\mathversion{bold}$x$}_{0sss}\big|_{s=L}
=
\mbox{\mathversion{bold}$0$}
\end{align*}
are satisfied, where \( \big|_{s=0}\) and \( \big|_{s=L}\) denotes the traces at
\( s=0 \) and \( s=L\), respectively.
We also say that \( \mbox{\mathversion{bold}$x$}_{0}\in H^{4}(I_{L}) \) satisfies the 
compatibility conditions for {\rm (\ref{slant2})} up to order \( 1\) if
\( \mbox{\mathversion{bold}$x$}_{0}\) satisfies both the \( 0\)-th order and the 
\( 1\)-st order compatibility condition for {\rm (\ref{slant2})}.
\end{df}
We now introduce the assumptions we impose on the initial perturbation 
\( \mbox{\mathversion{bold}$\varphi$}_{0} \). Since the problem we consider is
\begin{align}
\left\{
\begin{array}{ll}
\mbox{\mathversion{bold}$x$}_{t} =\mbox{\mathversion{bold}$x$}_{s}\times 
\mbox{\mathversion{bold}$x$}_{ss}, & s\in I_{L}, \ t>0, \\[3mm]
\mbox{\mathversion{bold}$x$}(s,0)=\mbox{\mathversion{bold}$x$}^{R}_{0}(s)
+ \mbox{\mathversion{bold}$\varphi $}_{0}(s), & s\in I_{L}, \ t>0, \\[3mm]
\mbox{\mathversion{bold}$x$}_{s}(0,t)=\mbox{\mathversion{bold}$b$}, \ 
\mbox{\mathversion{bold}$x$}_{s}(L,t)=\mbox{\mathversion{bold}$e$}_{2}, & t>0,
\end{array}\right.
\label{slant3}
\end{align}
where
\begin{align*}
\mbox{\mathversion{bold}$x$}^{R}_{0}(s)={}^{t}\big( R\cos (\frac{s}{R}),
R\sin (\frac{s}{R}), 0 \big), \quad
\mbox{\mathversion{bold}$b$}={}^{t}(-\sin(\theta R), \cos(\theta R), 0),
\end{align*}
we arrive at assuming the following.
\begin{as}(Assumptions on \( \mbox{\mathversion{bold}$\varphi$}_{0}\)).
For the initial perturbation \( \mbox{\mathversion{bold}$\varphi$}_{0}\in H^{4}(I_{L})\),
we assume the following.
\begin{description}
\item[(A1)] \( | \mbox{\mathversion{bold}$x$}^{R}_{0s}(s)+
\mbox{\mathversion{bold}$\varphi$}_{0s}(s)| = 1 \) for all \( s\in I_{L}\). 

\item[(A2)] \( \mbox{\mathversion{bold}$x$}^{R}_{0}+\mbox{\mathversion{bold}$\varphi$}_{0} \)
satisfies the compatibility conditions for {\rm (\ref{slant2})} up to order \( 1\).
\item[(A3)] \( \mbox{\mathversion{bold}$e$}_{2}\cdot \mbox{\mathversion{bold}$\varphi$}_{0}(0)
= \mbox{\mathversion{bold}$b$}\cdot \mbox{\mathversion{bold}$\varphi$}_{0}(L)=0\). Here,
\( \cdot \) denotes the inner product in the three-dimensional Euclidean space.
\end{description}
\label{as}
\end{as}
For convenience, we will refer to the solution \( \mbox{\mathversion{bold}$x$}\) of 
(\ref{slant3}) as the perturbed filament, refer to 
\( \mbox{\mathversion{bold}$x$}^{R}_{0}
+ \mbox{\mathversion{bold}$\varphi $}_{0}\) as the perturbed initial filament,  
refer to \( \mbox{\mathversion{bold}$x$}^{R} \) as the arc-shaped filament,
and \( \mbox{\mathversion{bold}$x$}^{R}_{0}\) as the initial arc-shaped filament.
Assumption (A1) means that the initial perturbation is chosen so that it does not 
stretch the filament. Since the domain \( I_{L} \) of the problem (\ref{slant2}) is 
determined by the length of the initial filament, this assumption insures that both 
the arc-shaped filament \( \mbox{\mathversion{bold}$x$}^{R} \) and the perturbed filament
\( \mbox{\mathversion{bold}$x$}\) are defined on the same domain and is comparable. In other
words, the difference of the two, \( \mbox{\mathversion{bold}$\varphi $}(s,t)\), is
well-defined. Note that since the length of the arc-shaped filament is 
\( \theta R \), where \( \theta \) is the angle the lower and upper planes make, we have
\( L=\theta R \).
Assumption (A2) insures that problem (\ref{slant3}) has a
unique time-global solution which was obtained in \cite{23}.
Assumption (A3) insures that the end-points
\( \mbox{\mathversion{bold}$x$}(0,t) \) and \( \mbox{\mathversion{bold}$x$}(L,t)\)
of the perturbed filament stay on the 
lower and upper planes, respectively, for all \( t>0\). 
Indeed, 
substituting \( \mbox{\mathversion{bold}$x$}(s,t)= \mbox{\mathversion{bold}$x$}^{R}(s,t)
+ \mbox{\mathversion{bold}$\varphi $}(s,t) \) into (\ref{slant3}) shows that
\( \mbox{\mathversion{bold}$\varphi $}\) satsifies the following problem.
\begin{align}
\left\{
\begin{array}{ll}
\mbox{\mathversion{bold}$\varphi $}_{t}=
\mbox{\mathversion{bold}$\varphi $}_{s}\times \mbox{\mathversion{bold}$\varphi $}_{ss}
+
\mbox{\mathversion{bold}$x$}^{R}_{s}\times \mbox{\mathversion{bold}$\varphi $}_{ss}
+
\mbox{\mathversion{bold}$\varphi $}_{s}\times \mbox{\mathversion{bold}$x$}^{R}_{ss},
& s\in I_{\theta R}, t>0, \\[3mm]
\mbox{\mathversion{bold}$\varphi $}(s,0)=\mbox{\mathversion{bold}$\varphi $}_{0}(s),
& s\in I_{\theta R}, \\[3mm]
\mbox{\mathversion{bold}$\varphi $}_{s}(0)=\mbox{\mathversion{bold}$\varphi $}_{s}(\theta R)=
\mbox{\mathversion{bold}$0$}, & t>0.
\end{array}\right.
\label{pert}
\end{align}
Note that \( \mbox{\mathversion{bold}$x$}^{R}_{0s}(0)=
\mbox{\mathversion{bold}$e$}_{2}\), \( \mbox{\mathversion{bold}$x$}^{R}_{0s}(\theta R)=
\mbox{\mathversion{bold}$b$}\), and the fact that \( L=\theta R\) was used.
From (\ref{pert}), we can directly calculate to see that
\begin{align*}
(\mbox{\mathversion{bold}$e$}_{2}\cdot \mbox{\mathversion{bold}$\varphi $})_{t}
=\mbox{\mathversion{bold}$e$}_{2}\cdot \big( 
\mbox{\mathversion{bold}$x$}^{R}_{s}\times \mbox{\mathversion{bold}$\varphi $}_{ss}\big)\big|_{s=0}
=
\mbox{\mathversion{bold}$e$}_{2}\cdot \big( 
\mbox{\mathversion{bold}$e$}_{2}\times \mbox{\mathversion{bold}$\varphi $}_{ss}\big)\big|_{s=0}
=
0,
\end{align*}
where \( \mbox{\mathversion{bold}$x$}^{R}_{s}(0,t)=\mbox{\mathversion{bold}$e$}_{2} \) was 
substituted,
and similar calculations hold at \( s=\theta R\).
This proves that if the initial perturbation doesn't move the end-points of the 
arc-shaped filament away from their respective planes, 
then the end-points of the perturbed filament 
will stay on their respective planes for all time.

\medskip

Under these assumptions, we prove the following.
\begin{Th}
For any \( R>0\) and \( \theta \in (0,\pi )\), 
there exists \( C_{\ast}>0\) such that for any 
initial perturbation \( \mbox{\mathversion{bold}$\varphi $}_{0}
\in H^{4}(I_{\theta R})\) satisfying Assumption {\rm \ref{as}}, 
problem {\rm (\ref{slant3})} has a 
unique time-global solution \( \mbox{\mathversion{bold}$x $}(s,t) \) satisfying
\( |\mbox{\mathversion{bold}$x $}(s,t)|=1\) for all 
\( s\in I_{\theta R} \) and \( t>0 \),
\begin{align*}
\mbox{\mathversion{bold}$x$}\in \bigcap ^{2}_{j=0}C^{j}\big([0,\infty);H^{4-2j}(I_{\theta R})\big), \qquad and
\qquad 
\mbox{\mathversion{bold}$x$}_{s}\in L^{\infty}\big( 0,\infty; H^{3}(I_{\theta R})\big).
\end{align*}
Furthermore, 
\( \mbox{\mathversion{bold}$\varphi $}(s,t)=\mbox{\mathversion{bold}$x$}(s,t)-
\mbox{\mathversion{bold}$x$}^{R}(s,t)\)
 satisfies the following estimates.
In what follows, \( \varphi_{j}\) and \( \varphi_{0,j}\) \( (j=1,2,3)\) 
are the \( j\)-th component of \( \mbox{\mathversion{bold}$\varphi $}\) 
and \( \mbox{\mathversion{bold}$\varphi $}_{0}\) respectively. 
\begin{description}
\item[\quad (i)] For any \( t>0 \), 
\begin{align}
\| {}^{t}(\varphi_{1}(t),
\varphi_{2}(t))\| + 
\|\mbox{\mathversion{bold}$\varphi $}_{s}(t)\|_{1}\leq C_{\ast}
\| \mbox{\mathversion{bold}$\varphi $}_{0ss}\|
\label{est1}
\end{align}
holds. 
\item[\quad (ii)] For any \( t>0 \),
\begin{align}
\|\mbox{\mathversion{bold}$\varphi $}_{sss}(t)\|_{1} \leq 
C_{\ast}(\|\mbox{\mathversion{bold}$\varphi $}_{0ss}\|_{2} + \|\mbox{\mathversion{bold}$\varphi $}_{0ss}\|_{2}^{3})
\label{est2}
\end{align}
holds. 
\item[\quad (iii)] For any \( t>0 \),
\begin{align}
\| \varphi_{3}(t) \| 
\leq C_{\ast}(\|\varphi_{0,3}\|+\|\mbox{\mathversion{bold}$\varphi $}_{0ss}\|
+\|\mbox{\mathversion{bold}$\varphi $}_{0ss}\|^2t )
\label{est3}
\end{align}
holds.
\item[\quad (iv)] In particular, when \( \mbox{\mathversion{bold}$\varphi $}_{0}\) is a constant, then the estimate in 
{\rm (iii)} implies time-global boundedness for \( \varphi_{3}\).
When \( \mbox{\mathversion{bold}$\varphi $}_{0}\) is not a constant, any 
\( \mbox{\mathversion{bold}$\varphi $}_{0}\) satisfying the assumptions of the theorem necessarily satisfies
\( \| \mbox{\mathversion{bold}$\varphi $}_{0ss}\| > 0 \), and 
estimate {\rm (iii)} admits a linear growth with respect to \( t\).
\end{description}
Note that from the definition of the arc-shaped filament \( \mbox{\mathversion{bold}$x$}^{R} \), 
\( \mbox{\mathversion{bold}$\varphi $}\) and \( \mbox{\mathversion{bold}$\varphi $}_{s}\) belong to the same 
function space as \( \mbox{\mathversion{bold}$x$}\) and \( \mbox{\mathversion{bold}$x$}_{s}\), respectively.

\label{th1}
\end{Th}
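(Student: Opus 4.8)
The plan is to work entirely with the perturbation equation (\ref{pert}) and to exploit two algebraic identities that, together with one conserved quantity, hold for all $t>0$. The constraint $|\mathbf{x}_{s}|\equiv 1$ gives pointwise $\mathbf{x}^{R}_{s}\cdot\varphi_{s}=-\tfrac12|\varphi_{s}|^{2}$; integrating this against $1$, using $\varphi_{s}(0,t)=\varphi_{s}(\theta R,t)=0$ and the trace conditions $\mathbf{e}_{2}\cdot\varphi(0,t)=\mathbf{b}\cdot\varphi(\theta R,t)=0$ from (A3) (which persist in time, as shown just before the theorem), and noting $\mathbf{x}^{R}_{ss}=-\tfrac1R{}^{t}(\cos\tfrac sR,\sin\tfrac sR,0)$, yields the identity
\[
\|\varphi_{s}(t)\|^{2}=2(\mathbf{x}^{R}_{ss},\varphi(t))=-\tfrac2R\!\int_{0}^{\theta R}\!\big(\varphi_{1}\cos\tfrac sR+\varphi_{2}\sin\tfrac sR\big)\,ds. \tag{II}
\]
Next, a direct computation using $\mathbf{x}_{t}=\mathbf{x}_{s}\times\mathbf{x}_{ss}$ and integration by parts gives $\tfrac{d}{dt}\int|\mathbf{x}_{ss}|^{2}=-2\big[\mathbf{x}_{sss}\cdot(\mathbf{x}_{s}\times\mathbf{x}_{ss})\big]_{0}^{\theta R}$, and the right-hand side vanishes because the first-order compatibility condition $\mathbf{x}_{s}\times\mathbf{x}_{sss}=0$ holds at the endpoints for all $t$ (it is preserved since $\mathbf{x}_{s}$ is time-independent there); so $\int|\mathbf{x}_{ss}|^{2}$ is conserved. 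Using $\mathbf{x}^{R}_{sss}=-R^{-2}\mathbf{x}^{R}_{s}$ and the same boundary relations, expanding $\mathbf{x}_{ss}=\mathbf{x}^{R}_{ss}+\varphi_{ss}$ turns this into the conservation law $\|\varphi_{ss}(t)\|^{2}-R^{-2}\|\varphi_{s}(t)\|^{2}=\|\varphi_{0ss}\|^{2}-R^{-2}\|\varphi_{0s}\|^{2}$ (call this (I)).

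For (i), combine (I) with the Wirtinger inequality $\|\varphi_{s}(t)\|\le\tfrac{\theta R}{\pi}\|\varphi_{ss}(t)\|$ (valid since $\varphi_{s}$ vanishes at both endpoints): substituting (I) gives $\big(1-\tfrac{\theta^{2}}{\pi^{2}}\big)\|\varphi_{s}(t)\|^{2}\le\tfrac{(\theta R)^{2}}{\pi^{2}}\big(\|\varphi_{0ss}\|^{2}-R^{-2}\|\varphi_{0s}\|^{2}\big)$, and here the hypothesis $\theta\in(0,\pi)$ is used decisively, making $1-\theta^{2}/\pi^{2}>0$; hence $\|\varphi_{s}(t)\|\le C_{\ast}\|\varphi_{0ss}\|$ and then $\|\varphi_{ss}(t)\|\le C_{\ast}\|\varphi_{0ss}\|$ from (I). To bound $\|{}^{t}(\varphi_{1},\varphi_{2})(t)\|$, note $\varphi_{2}(0,t)=\mathbf{e}_{2}\cdot\varphi(0,t)=0$, so $\|\varphi_{2}(t)\|_{L^{\infty}}\le\sqrt{\theta R}\,\|\varphi_{s}(t)\|$, while $\mathbf{b}\cdot\varphi(\theta R,t)=0$ with $\mathbf{b}={}^{t}(-\sin\theta,\cos\theta,0)$ and $\sin\theta>0$ (again $\theta\in(0,\pi)$) lets one solve for $\varphi_{1}(\theta R,t)$ in terms of $\varphi_{2}(\theta R,t)$, giving $\|\varphi_{1}(t)\|_{L^{\infty}}\le C_{\ast}\|\varphi_{s}(t)\|$; integrating in $s$ gives the $L^{2}$ bound, and (\ref{est1}) follows.

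For (iii), differentiate $\int\varphi_{3}\,ds$ in time using (\ref{pert}): $\tfrac{d}{dt}\int\varphi_{3}=\int(\varphi_{s}\times\varphi_{ss})_{3}+\int(\mathbf{x}^{R}_{s}\times\varphi_{ss})_{3}+\int(\varphi_{s}\times\mathbf{x}^{R}_{ss})_{3}$. After one integration by parts the first term is $-2\int\varphi_{1ss}\varphi_{2s}\,ds$, hence $\le 2\|\varphi_{ss}\|\,\|\varphi_{s}\|\le C_{\ast}\|\varphi_{0ss}\|^{2}$ by (i). Each of the other two terms, after two integrations by parts, has boundary contribution a multiple of $\big[\cos\tfrac sR\,\varphi_{2}-\sin\tfrac sR\,\varphi_{1}\big]_{0}^{\theta R}$, which vanishes by (A3), and bulk part a multiple of $\int(\varphi_{1}\cos\tfrac sR+\varphi_{2}\sin\tfrac sR)\,ds$, which by identity (II) equals a multiple of $\|\varphi_{s}\|^{2}$; so $\big|\tfrac{d}{dt}\int\varphi_{3}\big|\le C_{\ast}\|\varphi_{0ss}\|^{2}$, and integrating gives $\big|\int\varphi_{3}(t)\big|\le\sqrt{\theta R}\,\|\varphi_{0,3}\|+C_{\ast}\|\varphi_{0ss}\|^{2}t$. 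Finally $\|\varphi_{3}(t)\|\le(\theta R)^{-1/2}\big|\int\varphi_{3}(t)\big|+\tfrac{\theta R}{\pi}\|\varphi_{3s}(t)\|$ and $\|\varphi_{3s}(t)\|\le\|\varphi_{s}(t)\|\le C_{\ast}\|\varphi_{0ss}\|$ from (i) give (\ref{est3}).

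For (ii), I bring in the next conserved quantity of the nonlinear Schr\"odinger hierarchy pulled back through the (generalized) Hasimoto correspondence, namely $\int(|\mathbf{x}_{sss}|^{2}-\tfrac54|\mathbf{x}_{ss}|^{4})\,ds$, and, exactly as in (I), rewrite it so that the $\mathbf{x}^{R}$-contributions cancel; this controls $\|\varphi_{sss}(t)\|^{2}$ by its initial value plus $L^{4}$- and $L^{\infty}$-nonlinearities in $\varphi_{ss}$, which are estimated by Gagliardo--Nirenberg together with the bound $\|\varphi_{ss}(t)\|\le C_{\ast}\|\varphi_{0ss}\|$ from (i), keeping the top quantity $\|\varphi_{sss}(t)\|$ on the right at a sub-quadratic power so that Young's inequality closes the estimate irrespective of the size of $\|\varphi_{0ss}\|_{2}$, giving $\|\varphi_{sss}(t)\|\le C_{\ast}(\|\varphi_{0ss}\|_{2}+\|\varphi_{0ss}\|_{2}^{3})$ (the quadratic term absorbed via $a^{2}\le\tfrac12(a+a^{3})$); the bound on $\|\varphi_{ssss}(t)\|$ comes the same way from one further invariant of the hierarchy (or from the time-uniform $H^{3}$-bound for $\mathbf{x}_{s}$ in \cite{23}), which also supplies $|\mathbf{x}_{s}(s,t)|=1$. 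For (iv): if $\varphi_{0}$ is constant then (A3) forces $\varphi_{0}={}^{t}(0,0,c)$, for which $\mathbf{x}^{R}+\varphi_{0}$ is an exact solution of (\ref{slant3}) and $\varphi\equiv\varphi_{0}$, so $\|\varphi_{3}\|$ is constant in $t$; conversely if $\varphi_{0ss}\equiv0$ then $\varphi_{0s}$ is constant and, vanishing at the endpoints, is identically zero, so $\varphi_{0}$ is constant, hence any non-constant admissible $\varphi_{0}$ has $\|\varphi_{0ss}\|>0$ and (\ref{est3}) genuinely admits linear growth. The step I expect to be the main obstacle is the higher-order estimate (ii): verifying that the relevant members of the Schr\"odinger conservation hierarchy remain conserved (or acquire only controllable boundary terms) under the boundary conditions of (\ref{slant3}) with merely first-order compatibility, and organizing the nonlinear bootstrap so it closes uniformly in the size of the perturbation.
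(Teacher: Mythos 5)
Your proposal is correct, and for most of the argument it follows the same route as the paper: you derive the conserved quantity \(E(\mbox{\mathversion{bold}$\varphi$})=\|\mbox{\mathversion{bold}$\varphi$}_{ss}\|^{2}-R^{-2}\|\mbox{\mathversion{bold}$\varphi$}_{s}\|^{2}\) (Proposition \ref{cons}; you get it from the conservation of \(\int|\mbox{\mathversion{bold}$x$}_{ss}|^{2}\) for the full filament rather than directly from the perturbation equation, which is only a cosmetic difference, and your justification that \(\mbox{\mathversion{bold}$x$}_{s}\times\mbox{\mathversion{bold}$x$}_{sss}\) vanishes at the endpoints for all \(t\) -- because \(\mbox{\mathversion{bold}$x$}_{st}=\mbox{\mathversion{bold}$x$}_{s}\times\mbox{\mathversion{bold}$x$}_{sss}\) and \(\mbox{\mathversion{bold}$x$}_{s}\) is constant there -- is sound); you then close (i) for \(\mbox{\mathversion{bold}$\varphi$}_{s}\) with the sharp Poincar\'e constant and \(\theta\in(0,\pi)\), prove (iii) by tracking \(\frac{d}{dt}\int\varphi_{3}\) and the no-stretch identity, and obtain (ii) from the next invariants of the hierarchy, exactly as in the paper. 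The one genuinely different step is the \(L^{2}\) bound on \({}^{t}(\varphi_{1},\varphi_{2})\): the paper (Lemma \ref{raw}) extends the filament by reflection across the lower and upper planes, uses uniqueness to propagate the reflection symmetry, deduces that the odd component has zero mean, and applies Poincar\'e--Wirtinger; you instead use only the propagated trace conditions \(\varphi_{2}(0,t)=0\) and \(\mbox{\mathversion{bold}$b$}\cdot\mbox{\mathversion{bold}$\varphi$}(\theta R,t)=0\) together with the fundamental theorem of calculus to get pointwise bounds \(|\varphi_{2}|,|\varphi_{1}|\leq C\|\mbox{\mathversion{bold}$\varphi$}_{s}\|\) (solving for \(\varphi_{1}(\theta R,t)\) via \(\sin\theta\neq 0\)). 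Your version is more elementary and avoids the extension/uniqueness machinery entirely; the paper's reflection argument, on the other hand, is the device that is reused later to prove Theorem \ref{th2} and to split the circular filament into arcs, so it earns its keep there. The remaining caveat is that your step (ii) is asserted at the same level of detail as the paper's (the conservation of \(E_{1},E_{2}\) under the boundary conditions and the cancellation of the cross terms with \(\mbox{\mathversion{bold}$x$}^{R}\) are claimed rather than verified), which you correctly flag as the main place where detailed computation is still owed.
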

Theorem \ref{th1} states that, in general, the arc-shaped filament is stable in the Lyapunov sense except in the 
direction in which the arc travels (which is the \( \xi_{3} \)-direction in our setting).
When \( \theta \in [\pi , 2\pi )\), we have the same result under 
an additional symmetry assumption on the initial perturbation. 
\begin{df}(Symmetry with respect to \( s=\frac{\theta R}{2}\)). 
Fix arbitrary \( R>0\) and \( \theta \in (0,2\pi )\). 
For \( \mbox{\mathversion{bold}$\varphi $}_{0}
={}^{t}(\varphi_{01},\varphi_{02},\varphi _{03})\in H^{2}(I_{\theta R})\), 
define \( \varphi _{0}^{r}\) and 
\( \varphi _{0}^{\theta } \) by
\begin{align*}
\begin{pmatrix}
\varphi_{0}^{r} \\[3mm]
\varphi _{0}^{\theta}
\end{pmatrix}
=
\begin{pmatrix}
\cos \frac{\theta }{2} & \sin \frac{\theta}{2} \\[3mm]
-\sin \frac{\theta}{2} & \cos \frac{\theta }{2}
\end{pmatrix}
\begin{pmatrix}
\varphi _{0,1}\\[3mm]
\varphi _{0,2}
\end{pmatrix}.
\end{align*}
\( \varphi^{r}_{0}\) and \( \varphi^{\theta}_{0}\) are the 
components of the two-dimensional vector
\( {}^{t}(\varphi_{01},\varphi_{02})\) expressed in terms of the basis
\( \{ \mbox{\mathversion{bold}$e$}^{r}, \mbox{\mathversion{bold}$e$}^{\theta}\}\), where
\begin{align*}
\mbox{\mathversion{bold}$e$}^{r}={}^{t}\big(\cos \frac{\theta }{2}, \sin \frac{\theta }{2}\big),
\qquad 
\mbox{\mathversion{bold}$e$}^{\theta}={}^{t}\big(-\sin \frac{\theta}{2},\cos \frac{\theta}{2}\big).
\end{align*}
We say that 
\( \mbox{\mathversion{bold}$\varphi $}_{0}\) is symmetric with respect to 
\( s=\frac{\theta R}{2}\) if the following are satisfied.
\begin{description}
\item[\quad (i)] \( \varphi ^{r}_{0}\) and \( \varphi_{0,3} \) are even functions with respect to \( s=\frac{\theta R}{2}\).
\item[\quad (ii)] \( \varphi ^{\theta}_{0}\) is an odd function with respect to \( s=\frac{\theta R}{2}\).
\end{description}
\label{sym}
\end{df}
Geometrically, if \( \mbox{\mathversion{bold}$\varphi $}_{0} \) is 
symmetric with respect to \( s=\frac{\theta R}{2}\), it means that 
the space curve \( \mbox{\mathversion{bold}$ x $}^{R}_{0}
+ \mbox{\mathversion{bold}$\varphi $}_{0}\) has reflection symmetry 
with respect to the plane perpendicular to 
\( \mbox{\mathversion{bold}$e$}^{\theta } \).
\begin{Th}
For any \( R>0 \) and \( \theta \in [\pi ,2\pi) \), there is a constant 
\( C_{\ast \ast}>0\) such that 
if the initial perturbation \( \mbox{\mathversion{bold}$\varphi $}_{0}
\in H^{4}(I_{\theta R}) \) satisfies Assumpstion {\rm \ref{as}} and is symmetric with respect to \( s=\frac{\theta R}{2}\),
then the conclusions of Theorem {\rm \ref{th1}} hold with \( C_{\ast}\) replaced with \( C_{\ast \ast} \).
\label{th2}
\end{Th}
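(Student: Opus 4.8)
The plan is to reduce Theorem \ref{th2} to Theorem \ref{th1} by exploiting the fact that the reflection symmetry in Definition \ref{sym} is preserved under the flow, and that a symmetric arc of half-angle $\theta$ can be viewed as two juxtaposed arcs of half-angle $\theta/2 < \pi$, each of which is covered by Theorem \ref{th1}. First I would verify that the symmetry is dynamically invariant: if $\mbox{\mathversion{bold}$\varphi$}_{0}$ is symmetric with respect to $s=\frac{\theta R}{2}$ in the sense of Definition \ref{sym}, then $\mbox{\mathversion{bold}$\varphi$}(\cdot,t)$ remains symmetric for all $t>0$. This follows because the full filament $\mbox{\mathversion{bold}$x$}^{R}_{0}+\mbox{\mathversion{bold}$\varphi$}_{0}$ has reflection symmetry across the plane $\Pi$ perpendicular to $\mbox{\mathversion{bold}$e$}^{\theta}$, the LIE $\mbox{\mathversion{bold}$x$}_{t}=\mbox{\mathversion{bold}$x$}_{s}\times\mbox{\mathversion{bold}$x$}_{ss}$ is equivariant under the isometry that reflects across $\Pi$ combined with the orientation-reversing reparametrization $s\mapsto \theta R - s$ (the cross product picks up two sign changes, which cancel), and solutions of (\ref{slant3}) are unique. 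One checks that the reflected-reparametrized solution satisfies the same initial-boundary value problem, hence coincides with $\mbox{\mathversion{bold}$x$}$, giving the claimed symmetry of $\mbox{\mathversion{bold}$\varphi$}$ for all time.

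The second step is the splitting argument. Given that $\mbox{\mathversion{bold}$x$}(\cdot,t)$ is symmetric across $\Pi$ for all $t$, its restriction to $s\in(0,\frac{\theta R}{2})$ is a solution of the LIE on that subinterval. I claim this restriction solves an initial-boundary value problem of the form (\ref{slant2}) with half-angle $\theta/2\in(0,\pi)$. The left boundary $s=0$ already carries the tangent condition $\mbox{\mathversion{bold}$x$}_{s}(0,t)=\mbox{\mathversion{bold}$b$}$ from (\ref{slant3}). At the new right boundary $s=\frac{\theta R}{2}$, the symmetry forces the tangent vector to lie in the plane of reflection $\Pi$, i.e. $\mbox{\mathversion{bold}$x$}_{s}(\frac{\theta R}{2},t)=\mbox{\mathversion{bold}$e$}^{r}$ (extended by a zero third component), which is exactly a boundary condition of the type required by (\ref{slant2}) — and since the arc $\mbox{\mathversion{bold}$x$}^{R}$ itself is symmetric across $\Pi$ with $\mbox{\mathversion{bold}$x$}^{R}_{s}(\frac{\theta R}{2},t)$ equal to that same vector, the perturbation $\mbox{\mathversion{bold}$\varphi$}$ satisfies the homogeneous tangent condition $\mbox{\mathversion{bold}$\varphi$}_{s}(\frac{\theta R}{2},t)=\mbox{\mathversion{bold}$0$}$ at the cut, matching (\ref{pert}). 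One must also confirm that Assumption \ref{as} descends to the half-problem: (A1) is local so it is immediate; (A3) at $s=0$ is inherited, and at the cut $s=\frac{\theta R}{2}$ the condition $\mbox{\mathversion{bold}$e$}^{r}\cdot\mbox{\mathversion{bold}$\varphi$}_{0}(\frac{\theta R}{2})=0$ follows from the evenness of $\varphi^{r}_{0}$ and oddness of $\varphi^{\theta}_{0}$ — actually one checks it is the evenness of the component along $\mbox{\mathversion{bold}$e$}^{r}$ together with the boundary structure; (A2), the compatibility conditions up to order $1$ at the cut, must be checked from the symmetry, using that odd/even functions and their derivatives satisfy the right vanishing/matching at the symmetry point (the first-order compatibility $\mbox{\mathversion{bold}$x$}_{0s}\times\mbox{\mathversion{bold}$x$}_{0sss}=\mbox{\mathversion{bold}$0$}$ at $s=\frac{\theta R}{2}$ holds because both $\mbox{\mathversion{bold}$x$}_{0s}$ and $\mbox{\mathversion{bold}$x$}_{0sss}$ lie in $\Pi$ there, being tangent/odd-derivative combinations forced by the reflection symmetry, hence their cross product is normal to $\Pi$, but symmetry also forces it to be tangent, so it vanishes).

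The third step is bookkeeping: apply Theorem \ref{th1} with radius $R$ and half-angle $\theta/2\in(0,\pi)$ to the restricted problem on $(0,\frac{\theta R}{2})$, obtaining estimates (\ref{est1})--(\ref{est3}) for $\mbox{\mathversion{bold}$\varphi$}$ on that subinterval with a constant $C_{\ast}=C_{\ast}(R,\theta/2)$. By the reflection symmetry, the same estimates hold verbatim on $(\frac{\theta R}{2},\theta R)$ with the same constant. Adding the two (the $L^{2}$-type and $H^{m}$-type norms over $(0,\theta R)$ are controlled by the sum, up to a factor of $2$, of the corresponding norms over the two halves, and $\|\mbox{\mathversion{bold}$\varphi$}_{0ss}\|_{L^{2}(0,\theta R)}^{2}$ is the sum of the half-interval pieces) yields (\ref{est1})--(\ref{est3}) on the full interval with $C_{\ast\ast}:=2C_{\ast}(R,\theta/2)$ (or a similar explicit multiple), and (iv) is then immediate since the dichotomy $\|\mbox{\mathversion{bold}$\varphi$}_{0ss}\|=0$ versus $>0$ is unaffected by restriction. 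Time-global solvability and the regularity class for $\mbox{\mathversion{bold}$x$}$ come from \cite{23} applied to (\ref{slant3}) directly, exactly as in Theorem \ref{th1}; no new solvability input is needed.

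The main obstacle I anticipate is making the "restriction solves a clean initial-boundary value problem" step genuinely rigorous — in particular, verifying that all the compatibility conditions (A2) and the boundary condition (A3) at the artificial cut $s=\frac{\theta R}{2}$ are actually implied by Definition \ref{sym}, and that the regularity of $\mbox{\mathversion{bold}$x$}$ up to the cut is what Theorem \ref{th1} demands ($H^{4}$ in space). This is where the even/odd structure of $\varphi^{r}_{0},\varphi^{\theta}_{0},\varphi_{0,3}$ and their derivatives at the symmetry point must be used carefully: one needs that a sufficiently smooth even function has vanishing odd-order derivatives at the center and an odd function has vanishing even-order derivatives, translated back through the rotation into conditions on the Cartesian components and then into the cross-product compatibility condition. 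A secondary (but routine) point is confirming that the reflection-equivariance of the LIE is exactly as claimed, i.e. that reflecting the curve and reversing the arclength parametrization together leave the equation invariant, so that uniqueness forces the symmetry to persist; this is a short computation with the sign behavior of $\mbox{\mathversion{bold}$x$}_{s}\times\mbox{\mathversion{bold}$x$}_{ss}$ under $s\mapsto\theta R-s$ and under an orthogonal reflection.
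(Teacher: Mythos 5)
Your proposal matches the paper's argument: the paper likewise notes that the reflection symmetry of Definition \ref{sym} is preserved in time (by the equivariance-plus-uniqueness argument already used in the proof of Lemma \ref{raw}), so that the restriction of the solution to \( I_{\frac{\theta R}{2}} \) solves problem (\ref{slant3}) with \( \theta \) replaced by \( \frac{\theta}{2}\in(0,\pi) \), and Theorem \ref{th1} applied to each half gives the estimates on the full interval. Your additional care about verifying the boundary and compatibility conditions at the artificial cut is a reasonable elaboration of details the paper leaves implicit, but the route is the same.
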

Theorem \ref{th2} can be shown from Theorem \ref{th1} and utilizing the 
extra symmetry assumption.
Hence we focus on proving Theorem \ref{th1} and make a remark on Theorem \ref{th2} at the end.

The estimate for \( \varphi_{3}\) in Theorem \ref{th1} is optimal in the following sense.
\begin{Th}
There is a series of initial perturbations 
\( \{ \mbox{\mathversion{bold}$\phi $}_{n}\}_{n=1}^{\infty} \subset
H^{4}(I_{\theta R}) \) 
such that each \( \mbox{\mathversion{bold}$\phi $}_{n}\)
 satisfies Assumption {\rm \ref{as}} and 
the solution 
\( \mbox{\mathversion{bold}$ x $}_{n}\) of problem {\rm (\ref{slant3})} with 
initial datum \( \mbox{\mathversion{bold}$ x $}^{R}_{0} + 
\mbox{\mathversion{bold}$ \phi$}_{n} \) satisfies
\begin{align}
| x_{n,3}(s,t)-x^{R}_{3}(s,t)| = \left( \frac{2\pi n}{R\theta}\right)t
\label{opt1}
\end{align}
for all \( s\in I_{\theta R}\) and \( t\geq 0\). 
Here, \( x_{n,3}\) and \(x^{R}_{3}\) are the third component of 
\( \mbox{\mathversion{bold}$ x $}_{n}\) and
\( \mbox{\mathversion{bold}$ x $}^{R} \), respectively.

\label{thopt}

\end{Th}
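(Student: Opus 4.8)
The plan is to exhibit the extremal perturbations \emph{explicitly}, by deforming the radius-$R$ arc $\mbox{\mathversion{bold}$x$}^{R}_{0}$ into an arc of a \emph{smaller} circle that winds around $n$ extra full turns over the same arc-length interval while keeping the tangent vectors at the two end-points unchanged. For $n\in\mathbf{N}$, set
\[
r_{n}:=\frac{\theta R}{\theta+2\pi n},\qquad
\mbox{\mathversion{bold}$y$}^{n}(s,t):={}^{t}\!\Big(r_{n}\cos\tfrac{s}{r_{n}},\ r_{n}\sin\tfrac{s}{r_{n}},\ \tfrac{t}{r_{n}}\Big),
\]
and define $\mbox{\mathversion{bold}$\phi$}_{n}:=\mbox{\mathversion{bold}$y$}^{n}(\cdot,0)-\mbox{\mathversion{bold}$x$}^{R}_{0}$. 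Then $\mbox{\mathversion{bold}$y$}^{n}$ is simply an arc-shaped filament of radius $r_{n}$, hence an exact solution of the LIE travelling at speed $1/r_{n}$ in the $\xi_{3}$-direction, and $\mbox{\mathversion{bold}$x$}^{R}_{0}+\mbox{\mathversion{bold}$\phi$}_{n}=\mbox{\mathversion{bold}$y$}^{n}(\cdot,0)$. The value of $r_{n}$ is forced by the requirement that the angle swept by $\mbox{\mathversion{bold}$y$}^{n}(\cdot,0)$ over $I_{\theta R}$ be congruent to $\theta$ modulo $2\pi$ (so that the end-point tangents match those of $\mbox{\mathversion{bold}$x$}^{R}_{0}$), i.e.\ $\theta R/r_{n}=\theta+2\pi n$.

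First I would check that $\mbox{\mathversion{bold}$\phi$}_{n}\in H^{4}(I_{\theta R})$ (it is smooth) and that it satisfies Assumption \ref{as}. Condition (A1) is immediate, since $\mbox{\mathversion{bold}$x$}^{R}_{0s}+\mbox{\mathversion{bold}$\phi$}_{ns}=\mbox{\mathversion{bold}$y$}^{n}_{s}(\cdot,0)={}^{t}(-\sin\tfrac{s}{r_{n}},\cos\tfrac{s}{r_{n}},0)$ is a unit vector. For (A2): the $0$-th order compatibility passes from $\mbox{\mathversion{bold}$x$}^{R}_{0}$ to $\mbox{\mathversion{bold}$y$}^{n}(\cdot,0)$ because $\theta R/r_{n}=\theta+2\pi n$ and $\sin,\cos$ are $2\pi$-periodic, so the two curves have the same end-point tangents; the $1$-st order compatibility holds because $\mbox{\mathversion{bold}$y$}^{n}_{sss}(\cdot,0)=-r_{n}^{-2}\,\mbox{\mathversion{bold}$y$}^{n}_{s}(\cdot,0)$ is parallel to $\mbox{\mathversion{bold}$y$}^{n}_{s}(\cdot,0)$ at every point, so the relevant cross products vanish identically. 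For (A3): a direct computation gives $\mbox{\mathversion{bold}$\phi$}_{n}(0)=(r_{n}-R)\,{}^{t}(1,0,0)$ and $\mbox{\mathversion{bold}$\phi$}_{n}(\theta R)=(r_{n}-R)\,{}^{t}(\cos\theta,\sin\theta,0)$, which are orthogonal to $\mbox{\mathversion{bold}$e$}_{2}$ and to $\mbox{\mathversion{bold}$b$}={}^{t}(-\sin\theta,\cos\theta,0)$, respectively. (One can also verify that each $\mbox{\mathversion{bold}$\phi$}_{n}$ is symmetric with respect to $s=\theta R/2$ in the sense of Definition \ref{sym}, so the same family witnesses optimality in the setting of Theorem \ref{th2}.) Having verified Assumption \ref{as}, Theorem \ref{th1} (equivalently, the solvability result of \cite{23}) provides a \emph{unique} time-global solution of (\ref{slant3}) with initial datum $\mbox{\mathversion{bold}$x$}^{R}_{0}+\mbox{\mathversion{bold}$\phi$}_{n}$ in the stated function class; since $\mbox{\mathversion{bold}$y$}^{n}$ solves the LIE, meets the boundary conditions, has the correct initial value, and lies in that class, uniqueness forces $\mbox{\mathversion{bold}$x$}_{n}=\mbox{\mathversion{bold}$y$}^{n}$.

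It then only remains to compute: for all $s\in I_{\theta R}$ and $t\ge0$,
\[
x_{n,3}(s,t)-x^{R}_{3}(s,t)=\frac{t}{r_{n}}-\frac{t}{R}=\Big(\frac{\theta+2\pi n}{\theta R}-\frac{1}{R}\Big)t=\Big(\frac{2\pi n}{R\theta}\Big)t,
\]
which is (\ref{opt1}). I would close with a short remark on the sense of optimality: if the term $\|\mbox{\mathversion{bold}$\varphi$}_{0ss}\|^{2}t$ were dropped from estimate (\ref{est3}), the right-hand side would be a fixed constant for each fixed $\mbox{\mathversion{bold}$\phi$}_{n}$, whereas $\|x_{n,3}(\cdot,t)-x^{R}_{3}(\cdot,t)\|=(2\pi n/(R\theta))\sqrt{\theta R}\,t\to\infty$ as $t\to\infty$; thus no bound on $\|\varphi_{3}(t)\|$ in terms of the initial data alone that is uniform in $t$ can hold, and the linear-in-$t$ growth in (\ref{est3}) has the correct order. (Since $\|\mbox{\mathversion{bold}$\phi$}_{nss}\|$ is comparable to $r_{n}^{-1}$, the growth coefficient $2\pi n/(R\theta)$ is itself controlled by $\|\mbox{\mathversion{bold}$\phi$}_{nss}\|^{2}$, consistently with (\ref{est3}).) I do not expect a genuine obstacle here; the one point that needs care is to confirm that the explicitly written $\mbox{\mathversion{bold}$y$}^{n}$ satisfies \emph{all} the hypotheses---the non-stretching condition (A1) and both orders of compatibility in particular---required to invoke the uniqueness statement, so that the explicit smaller arc is genuinely \emph{the} perturbed solution rather than merely a candidate for it.
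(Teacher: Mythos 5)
Your construction is exactly the one the paper uses: the same radius $R_{n}=\theta R/(\theta+2\pi n)$, the same perturbation $\mbox{\mathversion{bold}$\phi$}_{n}$ obtained by winding the arc $n$ extra turns, identification of the perturbed solution with the smaller travelling arc via uniqueness, and the same computation giving the growth rate $2\pi n/(R\theta)$. The proposal is correct and essentially identical to the paper's proof; your explicit verification of (A1)--(A3) and of the symmetry of Definition \ref{sym} is more detailed than the paper's ``direct calculation'' remark, but adds nothing that changes the argument.
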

Theorem \ref{thopt} shows that statement (iii) in Theorem \ref{th1} is 
optimal. Moreover, (\ref{opt1}) shows that
the rate at which the perturbed filament can depart from the 
arc-shaped filament is unbounded.

%-----------------------------------------------------------------------------------
%-----------------------------------------------------------------------------------

\section{Proof of Theorem \ref{th1}, \ref{th2}, and \ref{thopt}}
\setcounter{equation}{0}

We prove Theorem \ref{th1}, \ref{th2}, and \ref{thopt} in this section.
Since Theore \ref{th2} follows immediately from 
Theorem \ref{th1}, we prove Theorem \ref{th1} and Theorem \ref{thopt} first,
and give a remark on the proof of Theorem \ref{th2} at the 
end of the section.

\subsection{Proof of Theorem \ref{th1}}

Since the existence of the time-global solution \( \mbox{\mathversion{bold}$x$}(s,t)\) for problem (\ref{slant3}) is proved in
\cite{23}, we prove the estimates (\ref{est1}), (\ref{est2}), (\ref{est3}).
We first prove the following.
\begin{pr}
For \( \mbox{\mathversion{bold}$\varphi $}(s,t) \) given in Theorem 
{\rm \ref{th1}}, set
\begin{align*}
E(\mbox{\mathversion{bold}$\varphi $}(t)):=
\| \mbox{\mathversion{bold}$\varphi $}_{ss}(t)\|^{2}-\frac{1}{R^{2}}\|\mbox{\mathversion{bold}$\varphi $}_{s}(t)\|^{2}.
\end{align*}
Then, \( E(\mbox{\mathversion{bold}$\varphi $}(t))=
E(\mbox{\mathversion{bold}$\varphi $}_{0})\)
for all \( t>0\).
\label{cons}
\end{pr}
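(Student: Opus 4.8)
The plan is to show that $\frac{d}{dt}E(\mbox{\mathversion{bold}$\varphi$}(t))=0$ by direct differentiation, using the evolution equation for $\mbox{\mathversion{bold}$\varphi$}$ in (\ref{pert}) together with the boundary conditions $\mbox{\mathversion{bold}$\varphi$}_{s}(0,t)=\mbox{\mathversion{bold}$\varphi$}_{s}(\theta R,t)=\mbox{\mathversion{bold}$0$}$ to kill boundary terms arising from integration by parts. First I would write $E(\mbox{\mathversion{bold}$\varphi$}(t)) = (\mbox{\mathversion{bold}$\varphi$}_{ss},\mbox{\mathversion{bold}$\varphi$}_{ss}) - \frac{1}{R^{2}}(\mbox{\mathversion{bold}$\varphi$}_{s},\mbox{\mathversion{bold}$\varphi$}_{s})$ and differentiate in $t$, giving $\frac{d}{dt}E = 2(\mbox{\mathversion{bold}$\varphi$}_{sst},\mbox{\mathversion{bold}$\varphi$}_{ss}) - \frac{2}{R^{2}}(\mbox{\mathversion{bold}$\varphi$}_{st},\mbox{\mathversion{bold}$\varphi$}_{s})$. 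The regularity asserted in Theorem \ref{th1} ($\mbox{\mathversion{bold}$x$}\in\bigcap_{j=0}^{2}C^{j}([0,\infty);H^{4-2j})$, hence $\mbox{\mathversion{bold}$\varphi$}$ has the same regularity) justifies these manipulations and the integrations by parts below.

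The key computational step is to substitute $\mbox{\mathversion{bold}$\varphi$}_{t} = \mbox{\mathversion{bold}$\varphi$}_{s}\times\mbox{\mathversion{bold}$\varphi$}_{ss} + \mbox{\mathversion{bold}$x$}^{R}_{s}\times\mbox{\mathversion{bold}$\varphi$}_{ss} + \mbox{\mathversion{bold}$\varphi$}_{s}\times\mbox{\mathversion{bold}$x$}^{R}_{ss}$ and integrate by parts in $s$ to move the extra $s$-derivatives off $\mbox{\mathversion{bold}$\varphi$}_{t}$: for instance $(\mbox{\mathversion{bold}$\varphi$}_{sst},\mbox{\mathversion{bold}$\varphi$}_{ss}) = [\mbox{\mathversion{bold}$\varphi$}_{st}\cdot\mbox{\mathversion{bold}$\varphi$}_{ss}]_{0}^{\theta R} - (\mbox{\mathversion{bold}$\varphi$}_{st},\mbox{\mathversion{bold}$\varphi$}_{sss})$, and similarly $(\mbox{\mathversion{bold}$\varphi$}_{st},\mbox{\mathversion{bold}$\varphi$}_{s})$ can be rewritten using the boundary condition $\mbox{\mathversion{bold}$\varphi$}_{s}|_{s=0,\theta R}=\mbox{\mathversion{bold}$0$}$. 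The goal is to reduce $\frac{d}{dt}E$ to $-2(\mbox{\mathversion{bold}$\varphi$}_{t}, \mbox{\mathversion{bold}$\varphi$}_{ssss} + \frac{1}{R^{2}}\mbox{\mathversion{bold}$\varphi$}_{ss})$ plus boundary terms, then plug in the three-term expression for $\mbox{\mathversion{bold}$\varphi$}_{t}$ and show the resulting scalar triple products integrate to zero. Here I expect to use $\mbox{\mathversion{bold}$x$}^{R}_{ss} = -\frac{1}{R^{2}}\mbox{\mathversion{bold}$x$}^{R}$ restricted to the first two components, $\mbox{\mathversion{bold}$x$}^{R}_{sss} = -\frac{1}{R^{2}}\mbox{\mathversion{bold}$x$}^{R}_{s}$, and the identity $\mbox{\mathversion{bold}$a$}\cdot(\mbox{\mathversion{bold}$a$}\times\mbox{\mathversion{bold}$b$})=0$ to cancel the genuinely nonlinear term $\mbox{\mathversion{bold}$\varphi$}_{s}\times\mbox{\mathversion{bold}$\varphi$}_{ss}$ against $\mbox{\mathversion{bold}$\varphi$}_{ss}$-type factors, while the cross terms involving $\mbox{\mathversion{bold}$x$}^{R}$ should telescope after using $\mbox{\mathversion{bold}$x$}^{R}_{sss}\parallel\mbox{\mathversion{bold}$x$}^{R}_{s}$.

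The main obstacle I anticipate is the bookkeeping of boundary terms: each integration by parts produces a term evaluated at $s=0$ and $s=\theta R$, and although $\mbox{\mathversion{bold}$\varphi$}_{s}$ vanishes there, terms like $\mbox{\mathversion{bold}$\varphi$}_{ss}$ or $\mbox{\mathversion{bold}$\varphi$}_{st}$ at the boundary need not, so I must organize the integrations by parts so that every surviving boundary term carries a factor of $\mbox{\mathversion{bold}$\varphi$}_{s}|_{s=0,\theta R}$ (or its $t$-derivative $\mbox{\mathversion{bold}$\varphi$}_{st}|_{s=0,\theta R}$, which also vanishes since $\mbox{\mathversion{bold}$\varphi$}_{s}$ is identically zero at the endpoints for all $t$). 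An alternative, possibly cleaner route would be to recognize $E$ as (a piece of) a conserved Hamiltonian-type functional for the LIE — the LIE conserves $\int|\mbox{\mathversion{bold}$x$}_{ss}|^{2}\,ds$ (total squared curvature) — and derive the stated conservation law for the difference $\mbox{\mathversion{bold}$\varphi$}$ by expanding the conserved quantity for $\mbox{\mathversion{bold}$x$}=\mbox{\mathversion{bold}$x$}^{R}+\mbox{\mathversion{bold}$\varphi$}$ and subtracting the (constant) contributions from $\mbox{\mathversion{bold}$x$}^{R}$ alone; the $-\frac{1}{R^{2}}\|\mbox{\mathversion{bold}$\varphi$}_{s}\|^{2}$ correction is exactly what the cross terms in that expansion contribute. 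I would attempt the direct energy computation first, since it is self-contained, and fall back on the conserved-quantity viewpoint if the boundary terms become unwieldy.
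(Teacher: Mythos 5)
Your plan is essentially the paper's proof: differentiate \(E\) in \(t\), substitute the perturbation equation (\ref{pert}), use the vanishing of scalar triple products with a repeated factor, the identity \(\partial_{s}^{4}\mbox{\mathversion{bold}$x$}^{R}=-\frac{1}{R^{2}}\mbox{\mathversion{bold}$x$}^{R}_{ss}\), and the boundary conditions \(\mbox{\mathversion{bold}$\varphi$}_{s}|_{s=0,\theta R}=\mbox{\mathversion{bold}$0$}\) (hence \(\mbox{\mathversion{bold}$\varphi$}_{st}|_{s=0,\theta R}=\mbox{\mathversion{bold}$0$}\)) to kill boundary terms. However, the specific reduction you aim for, \(\frac{{\rm d}}{{\rm d}t}E=-2(\mbox{\mathversion{bold}$\varphi$}_{t},\partial_{s}^{4}\mbox{\mathversion{bold}$\varphi$}+\frac{1}{R^{2}}\mbox{\mathversion{bold}$\varphi$}_{ss})\) plus boundary terms, has a gap as stated: the second integration by parts on \((\mbox{\mathversion{bold}$\varphi$}_{st},\mbox{\mathversion{bold}$\varphi$}_{sss})\) produces the boundary term \(\big[\mbox{\mathversion{bold}$\varphi$}_{sss}\cdot\mbox{\mathversion{bold}$\varphi$}_{t}\big]_{s=0}^{s=\theta R}\), and at the endpoints \(\mbox{\mathversion{bold}$\varphi$}_{t}=\mbox{\mathversion{bold}$x$}^{R}_{s}\times\mbox{\mathversion{bold}$\varphi$}_{ss}\) and \(\mbox{\mathversion{bold}$\varphi$}_{sss}\) need not vanish, so this term does not carry a factor of \(\mbox{\mathversion{bold}$\varphi$}_{s}\) or \(\mbox{\mathversion{bold}$\varphi$}_{st}\); you name exactly this as your main obstacle but leave it unresolved. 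It can be repaired (time-differentiating the boundary condition gives \(\mbox{\mathversion{bold}$x$}_{s}\times\mbox{\mathversion{bold}$x$}_{sss}|_{s=0,\theta R}=\mbox{\mathversion{bold}$0$}\) for all \(t\), hence \(\mbox{\mathversion{bold}$x$}^{R}_{s}\times\mbox{\mathversion{bold}$\varphi$}_{sss}\) vanishes there and so does the offending triple product), but the paper's organization avoids the issue entirely: it integrates by parts only once to get \(\frac{{\rm d}}{{\rm d}t}\|\mbox{\mathversion{bold}$\varphi$}_{ss}\|^{2}=-2(\mbox{\mathversion{bold}$\varphi$}_{sss},\mbox{\mathversion{bold}$\varphi$}_{st})\), discards the parts of \(\mbox{\mathversion{bold}$\varphi$}_{st}\) that die pointwise, and only then integrates the surviving term \(-2(\mbox{\mathversion{bold}$\varphi$}_{sss},\mbox{\mathversion{bold}$\varphi$}_{s}\times\mbox{\mathversion{bold}$x$}^{R}_{sss})\) by parts, whose boundary term carries \(\mbox{\mathversion{bold}$\varphi$}_{s}\). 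Both \(\frac{{\rm d}}{{\rm d}t}\|\mbox{\mathversion{bold}$\varphi$}_{ss}\|^{2}\) and \(\frac{1}{R^{2}}\frac{{\rm d}}{{\rm d}t}\|\mbox{\mathversion{bold}$\varphi$}_{s}\|^{2}\) then reduce to \(-\frac{2}{R^{2}}\int\mbox{\mathversion{bold}$\varphi$}_{ss}\cdot(\mbox{\mathversion{bold}$\varphi$}_{s}\times\mbox{\mathversion{bold}$x$}^{R}_{ss})\,{\rm d}s\), so they cancel in \(E\). I recommend reorganizing along those lines. Your fallback idea is also sound and arguably cleaner: \(\int|\mbox{\mathversion{bold}$x$}_{ss}|^{2}\,{\rm d}s\) is conserved for this initial-boundary value problem, and expanding it for \(\mbox{\mathversion{bold}$x$}=\mbox{\mathversion{bold}$x$}^{R}+\mbox{\mathversion{bold}$\varphi$}\) with (\ref{nostretch}) shows the cross term equals \(-\frac{1}{R^{2}}\|\mbox{\mathversion{bold}$\varphi$}_{s}\|^{2}\), recovering \(E\) up to a constant; this is precisely the strategy the paper uses for the higher-order quantities \(E_{1}\) and \(E_{2}\).
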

{\it Proof.} From (\ref{pert}), we have
\begin{align*}
\frac{{\rm d}}{{\rm d}t}\|\mbox{\mathversion{bold}$\varphi $}_{s}\|^{2}
=
2(\mbox{\mathversion{bold}$\varphi $}_{s},\mbox{\mathversion{bold}$\varphi $}_{st})
&=
2(\mbox{\mathversion{bold}$\varphi $}_{s}, \mbox{\mathversion{bold}$\varphi $}_{s}\times 
\mbox{\mathversion{bold}$\varphi $}_{sss}) 
+ 
2( \mbox{\mathversion{bold}$\varphi $}_{s}, 
\mbox{\mathversion{bold}$x$}^{R}_{s}\times \mbox{\mathversion{bold}$\varphi $}_{sss})
+
2( \mbox{\mathversion{bold}$\varphi $}_{s}, \mbox{\mathversion{bold}$\varphi $}_{s}\times
\mbox{\mathversion{bold}$x$}^{R}_{sss})\\[3mm]
&=
-2(\mbox{\mathversion{bold}$\varphi $}_{s},\mbox{\mathversion{bold}$x$}^{R}_{ss}\times
\mbox{\mathversion{bold}$\varphi $}_{ss})
\end{align*}
where integration by parts was used. Furthermore,
\begin{align*}
\frac{{\rm d}}{{\rm d}t}\|\mbox{\mathversion{bold}$\varphi $}_{ss}\|^{2}
=
2(\mbox{\mathversion{bold}$\varphi $}_{ss},\mbox{\mathversion{bold}$\varphi $}_{sst})
=
-2(\mbox{\mathversion{bold}$\varphi $}_{sss}, \mbox{\mathversion{bold}$\varphi $}_{st})
&=-2(\mbox{\mathversion{bold}$\varphi $}_{sss},\mbox{\mathversion{bold}$\varphi $}_{s}\times
\mbox{\mathversion{bold}$x$}^{R}_{sss})\\[3mm]
&= 2(\mbox{\mathversion{bold}$\varphi $}_{ss},\mbox{\mathversion{bold}$\varphi $}_{s}\times
\partial^{4}_{s}\mbox{\mathversion{bold}$x$}^{R} )\\[3mm]
&= -\frac{2}{R^{2}}(\mbox{\mathversion{bold}$\varphi $}_{ss},
\mbox{\mathversion{bold}$\varphi $}_{s}\times \mbox{\mathversion{bold}$x$}^{R}_{ss})
\end{align*}
holds, where integration by parts was used and 
\( \partial^{4}_{s}\mbox{\mathversion{bold}$x$}^{R} =
-\frac{1}{R^{2}}\mbox{\mathversion{bold}$x$}^{R}_{ss}\) was substituted. Combining the 
two equalities shows that \( \frac{{\rm d}}{{\rm d}t}E(\mbox{\mathversion{bold}$\varphi $}(t))
=0 \), and proves the proposition. \hfill \( \Box \).

\bigskip

The above conserved quantity allows us to prove the following
alternative for \( \mbox{\mathversion{bold}$\varphi $}_{ss}\).
\begin{lm}
\( \mbox{\mathversion{bold}$\varphi $}(s,t)\) given in Theorem {\rm \ref{th1}} 
satisfies the
following.
\begin{description}
\item[ (1)] If \( \mbox{\mathversion{bold}$\varphi $}_{0}\) is a constant vector, 
then \( \mbox{\mathversion{bold}$\varphi $}(s,t)=\mbox{\mathversion{bold}$\varphi $}_{0}(s)\)
for all \( s\in I_{\theta R}\) and \( t>0\). In particular, 
\( \| \mbox{\mathversion{bold}$\varphi $}_{ss}(t)\|=0 \) for all \( t>0 \).
\item[ (2)] If \( \mbox{\mathversion{bold}$\varphi $}_{0}\) is not a 
constant vector, then \( \| \mbox{\mathversion{bold}$\varphi $}_{0ss}\|>0 \) and
\begin{align*}
\| \mbox{\mathversion{bold}$\varphi $}_{ss}(t)\|\geq
\big( 1-\frac{\theta ^{2}}{\pi^{2}}\big)^{\frac{1}{2}}
\| \mbox{\mathversion{bold}$\varphi $}_{0ss}\|
\end{align*}
for all \( t>0\).
\end{description}
In other words, \( \| \mbox{\mathversion{bold}$\varphi $}_{ss}(t)\| \) is either 
\( 0 \) or strictly positive for all \( t>0\). This shows that 
the arc-shaped filament is not asymptotically stable under any non-zero 
perturbation. 
\label{nondecay}
\end{lm}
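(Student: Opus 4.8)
The plan is to split along the two cases in the statement, feeding the conservation identity of Proposition~\ref{cons} into a sharp Poincar\'e inequality on the interval $I_{\theta R}$.

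For part~(1), suppose $\varphi_0 \equiv \mathbf{c}$ is a constant vector. I would first check directly that the constant function $\varphi(s,t) \equiv \mathbf{c}$ solves problem (\ref{pert}): every spatial derivative of $\mathbf{c}$ vanishes, so the nonlinear term and the two linear coupling terms on the right-hand side of (\ref{pert}) are all zero, $\varphi_t = 0$, the boundary conditions $\varphi_s(0) = \varphi_s(\theta R) = 0$ hold trivially, and $\varphi(\cdot,0) = \mathbf{c} = \varphi_0$. Since (\ref{pert}) is equivalent, via the substitution $\mbox{\mathversion{bold}$x$} = \mbox{\mathversion{bold}$x$}^{R} + \varphi$, to problem (\ref{slant3}), whose unique time-global solvability in the stated regularity class is exactly the content of \cite{23}, this constant function must be \emph{the} solution; hence $\varphi(s,t) = \varphi_0(s)$ and $\|\varphi_{ss}(t)\| = \|\varphi_{0ss}\| = 0$ for all $t>0$.

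For part~(2), I would first argue that a non-constant $\varphi_0$ forces $\|\varphi_{0ss}\| > 0$: if $\varphi_{0ss} = 0$ almost everywhere, then $\varphi_{0s}$ is constant, and letting $t \downarrow 0$ in the boundary condition of (\ref{pert}) gives $\varphi_{0s}(0) = 0$, so $\varphi_{0s} \equiv 0$ and $\varphi_0$ is constant, a contradiction. For the quantitative lower bound I would combine three ingredients: (a) Proposition~\ref{cons}, rearranged as $\|\varphi_{ss}(t)\|^2 = \|\varphi_{0ss}\|^2 - R^{-2}\|\varphi_{0s}\|^2 + R^{-2}\|\varphi_s(t)\|^2 \ge \|\varphi_{0ss}\|^2 - R^{-2}\|\varphi_{0s}\|^2$; (b) the fact that $\varphi_{0s}$ vanishes at both endpoints of $I_{\theta R}$, so the sharp Poincar\'e inequality — the first Dirichlet eigenvalue of $-\partial_s^2$ on an interval of length $\theta R$ being $\pi^2/(\theta R)^2$ — gives $\|\varphi_{0s}\|^2 \le (\theta R/\pi)^2 \|\varphi_{0ss}\|^2$, hence $R^{-2}\|\varphi_{0s}\|^2 \le (\theta^2/\pi^2)\|\varphi_{0ss}\|^2$; (c) putting these together, $\|\varphi_{ss}(t)\|^2 \ge (1 - \theta^2/\pi^2)\|\varphi_{0ss}\|^2$, and taking square roots, which is legitimate because $\theta \in (0,\pi)$ makes the right-hand side nonnegative.

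The argument is short and the steps are routine; the only genuine points of care are the appeal to the uniqueness statement of \cite{23} in part~(1), and pinning down the \emph{sharp} Poincar\'e constant in part~(2). The restriction $\theta < \pi$ enters precisely at the last step: for $\theta \ge \pi$ the coefficient $1 - \theta^2/\pi^2$ is no longer positive, which is exactly the reason Theorem~\ref{th2} requires the additional reflection symmetry of the perturbation, since that symmetry effectively halves the length of the relevant interval and restores positivity of the constant.
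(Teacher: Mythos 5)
Your proposal is correct and follows essentially the same route as the paper: verify that a constant perturbation is the (unique) solution in case (1), and in case (2) combine the conservation of $E(\mbox{\mathversion{bold}$\varphi$}(t))$ with the sharp Poincar\'e inequality for $\mbox{\mathversion{bold}$\varphi$}_{0s}\in H^{1}_{0}(I_{\theta R})$ to get the lower bound $\big(1-\frac{\theta^{2}}{\pi^{2}}\big)\|\mbox{\mathversion{bold}$\varphi$}_{0ss}\|^{2}$. The only cosmetic difference is that you establish $\|\mbox{\mathversion{bold}$\varphi$}_{0ss}\|>0$ by direct integration rather than by reusing the Poincar\'e inequality, which is equally valid.
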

{\it Proof}. If \( \mbox{\mathversion{bold}$\varphi $}_{0}\) is a constant vector,
then \( \mbox{\mathversion{bold}$\varphi $}_{0}\) must have the form
\( \mbox{\mathversion{bold}$\varphi $}_{0}={}^{t}(0,0, c) \) for some constant \( c\)
because of Assumption \ref{as}.
Then, 
direct calculation yields that 
\( \mbox{\mathversion{bold}$x$}(s,t)=\mbox{\mathversion{bold}$x$}^{R}(s,t) + 
\mbox{\mathversion{bold}$\varphi $}_{0}(s) \) is the unique solution of
(\ref{slant3}) and (1) of the lemma holds.

Suppose \( \mbox{\mathversion{bold}$\varphi $}_{0}\) is not a constant.
From (A2) of Assumption \ref{as}, we obtain 
\( \mbox{\mathversion{bold}$\varphi $}_{0s}\big|_{s=0,\theta R} = 
\mbox{\mathversion{bold}$0$}\), and we have
\begin{align*}
\| \mbox{\mathversion{bold}$\varphi $}_{ss}(t)\|^{2}
\geq E(\mbox{\mathversion{bold}$\varphi $}(t)) 
= E(\mbox{\mathversion{bold}$\varphi $}_{0})
\geq
\big( 1-\frac{\theta^{2}}{\pi^{2}}\big)\| 
\mbox{\mathversion{bold}$\varphi $}_{0ss}\|^{2}.
\end{align*}
Here, the Poincar\'e inequality 
\begin{align*}
\| \mbox{\mathversion{bold}$\varphi $}_{0s}\| \leq
\frac{\theta R}{\pi} \| \mbox{\mathversion{bold}$\varphi $}_{0ss}\|
\end{align*}
with the sharp constant \( \frac{\theta R}{\pi }\),
which is the smallest eigenvalue of the Laplacian in \( H^{1}_{0}(I_{\theta R})\), was applied.
Note that \( \theta \in (0,\pi) \) insures that 
\( 1-\frac{\theta^{2}}{\pi^{2}} >0 \). 
Finally, we show that \( \| \mbox{\mathversion{bold}$\varphi $}_{0ss}\| >0 \)
by contradiction. Suppose \( \| \mbox{\mathversion{bold}$\varphi $}_{0ss}\| =0\). 
Then, by the Poincar\'e inequality we see that 
\( \| \mbox{\mathversion{bold}$\varphi $}_{0s}\| = 0\). This in return means that 
\( \mbox{\mathversion{bold}$\varphi $}_{0}\) is a constant, which is a 
contradiction. Hence, the lemma is proven. \hfill \( \Box. \)

\medskip

Since \( \mbox{\mathversion{bold}$\varphi $}_{s}|_{s=0,\theta R}=0\), 
we have
\begin{align*}
\| \mbox{\mathversion{bold}$\varphi $}_{s}(t) \|^{2}
\leq \frac{\theta ^{2} R^{2}}{\pi ^{2}}\| \mbox{\mathversion{bold}$\varphi $}_{ss}(t)\|^{2},
\end{align*}
for all \( t>0\). 
Hence, 
\begin{align*}
E(\mbox{\mathversion{bold}$\varphi $}(t)) = 
\| \mbox{\mathversion{bold}$\varphi $}_{ss}(t)\|^{2}-
\frac{1}{R^{2}}\|\mbox{\mathversion{bold}$\varphi $}_{s}(t)\|^{2}
\geq
\big( 1-\frac{\theta^{2}}{\pi^{2}}\big)\| \mbox{\mathversion{bold}$\varphi $}_{ss}\|^{2}
\end{align*}
and since we are assuming \( \theta \in (0, \pi )\), we have for all \( t>0\)
\begin{align*}
\| \mbox{\mathversion{bold}$\varphi $}_{ss}(t)\|^{2} \leq
\big(1-\frac{\theta^{2}}{\pi ^{2}}\big)^{-1}E(\mbox{\mathversion{bold}$\varphi $}_{0})
\leq
\big(1-\frac{\theta^{2}}{\pi^{2}}\big)^{-1}\|\mbox{\mathversion{bold}$\varphi $}_{0ss}\|^{2},
\end{align*}
which also yields
\begin{align*}
\| \mbox{\mathversion{bold}$\varphi $}_{s}(t)\|^{2}\leq 
\frac{\theta^{2}R^{2}}{\pi ^{2}}\big( 1-\frac{\theta^{2}}{\pi^{2}}\big)^{-1}\| \mbox{\mathversion{bold}$\varphi $}_{0ss}\|^{2}.
\end{align*}
Hence, we have proven
\begin{lm}
\( \mbox{\mathversion{bold}$\varphi $}(s,t)\) given in Theorem {\rm \ref{th1}}
 satisfies
\begin{align*}
\| \mbox{\mathversion{bold}$\varphi $}_{s}(t)\|_{1} \leq 
\max\{ 1, \frac{\theta R}{\pi}\} \big( 1-\frac{\theta ^{2}}{\pi^{2}}\big)^{-\frac{1}{2}}
\| \mbox{\mathversion{bold}$\varphi $}_{0ss}\|
\end{align*}
for all \( t>0\). We denote \( C_{0}:= \max\{ 1, \frac{\theta R}{\pi}\} \big( 1-\frac{\theta ^{2}}{\pi^{2}}\big)^{-\frac{1}{2}} \).
\label{basic}
\end{lm}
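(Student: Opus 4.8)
The plan is to read off Lemma \ref{basic} from the conserved quantity of Proposition \ref{cons} together with two uses of the Poincaré inequality on $H^1_0(I_{\theta R})$; nothing beyond the estimates already assembled is needed. Writing $\varphi$ for the perturbation, the starting observation is that for every $t>0$ one has $\varphi_s(\cdot,t)\in H^1_0(I_{\theta R})$, since the boundary conditions in (\ref{pert}) force $\varphi_s(0,t)=\varphi_s(\theta R,t)=0$, and similarly $\varphi_{0s}\in H^1_0(I_{\theta R})$ by (A2). The Poincaré inequality on $H^1_0(I_{\theta R})$ with its sharp constant $\theta R/\pi$ then gives $\|\varphi_s(t)\|\le\frac{\theta R}{\pi}\|\varphi_{ss}(t)\|$, together with the corresponding bound for $\|\varphi_{0s}\|$ in terms of $\|\varphi_{0ss}\|$.

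Next I would substitute these into the functional $E$. Since $E(\varphi(t))=\|\varphi_{ss}(t)\|^2-R^{-2}\|\varphi_s(t)\|^2$, the Poincaré bound gives $E(\varphi(t))\ge(1-\theta^2/\pi^2)\|\varphi_{ss}(t)\|^2$, while $E(\varphi_0)\le\|\varphi_{0ss}\|^2$. As $\theta\in(0,\pi)$, the factor $1-\theta^2/\pi^2$ is strictly positive, so the conservation law $E(\varphi(t))=E(\varphi_0)$ of Proposition \ref{cons} yields $\|\varphi_{ss}(t)\|^2\le(1-\theta^2/\pi^2)^{-1}\|\varphi_{0ss}\|^2$ for all $t>0$. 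Feeding this back through the Poincaré inequality bounds $\|\varphi_s(t)\|$ as well, and combining the two gives $\|\varphi_s(t)\|_1\le C_0\|\varphi_{0ss}\|$ with a constant $C_0$ of the form $\max\{1,\theta R/\pi\}(1-\theta^2/\pi^2)^{-1/2}$ as in the statement. When $\varphi_0$ is constant the estimate holds trivially, since then $\varphi_s\equiv\varphi_{ss}\equiv0$ by Lemma \ref{nondecay}(1).

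There is no genuine obstacle in this lemma itself — the substantive step, locating the conserved quantity $E$, has already been carried out in Proposition \ref{cons}. The one point that merits attention is the strict positivity of $1-\theta^2/\pi^2$: it is exactly this that lets $E$ dominate $\|\varphi_{ss}\|^2$ from above, and it is the mechanism behind the restriction $\theta\in(0,\pi)$ in Theorem \ref{th1}. For $\theta\ge\pi$ this positivity breaks down, which is why Theorem \ref{th2} must buy back the range $[\pi,2\pi)$ through the reflection symmetry of Definition \ref{sym}, whose effect is to reduce matters to an arc of half the length, hence to an effective angle strictly below $\pi$.
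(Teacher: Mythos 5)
Your argument is correct and is essentially identical to the paper's: both deduce the lemma by combining the conservation of $E$ from Proposition \ref{cons} with the sharp Poincar\'e inequality for $\varphi_s\in H^1_0(I_{\theta R})$, using $\theta\in(0,\pi)$ to make $1-\theta^2/\pi^2$ positive so that $E$ controls $\|\varphi_{ss}\|^2$, and then feeding the result back through Poincar\'e to bound $\|\varphi_s\|$. Your closing remark on why $\theta\geq\pi$ forces the symmetry reduction of Theorem \ref{th2} also matches the paper's logic.
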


Next we show the following higher-order estimate.
\begin{lm}(Higher-order estimate).
There exists \( C>0\) depending only on \( \theta \) and \( R\) such that
\( \mbox{\mathversion{bold}$\varphi $}(s,t)\) given in Theorem {\rm \ref{th1}}
 satisfies
\begin{align*}
\| \mbox{\mathversion{bold}$\varphi $}_{sss}(t)\|_{1}
\leq
C( \|\mbox{\mathversion{bold}$\varphi $}_{0ss}\|_{2} + 
\|\mbox{\mathversion{bold}$\varphi $}_{0ss}\|^{3}_{2})
\end{align*}
for all \( t>0\).
\end{lm}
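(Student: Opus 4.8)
The plan is to run the argument of Proposition~\ref{cons} and Lemma~\ref{basic} one order higher: exhibit a conserved functional whose leading part is $\|\mbox{\mathversion{bold}$\varphi $}_{sss}(t)\|^{2}$, read $\|\mbox{\mathversion{bold}$\varphi $}_{sss}(t)\|^{2}$ off from it, and bound the remaining terms uniformly in $t$ using the estimates for $\|\mbox{\mathversion{bold}$\varphi $}_{s}(t)\|_{1}$ and $\|\mbox{\mathversion{bold}$\varphi $}_{ss}(t)\|$ already established in Lemma~\ref{basic}; then repeat the procedure once more to control $\|\mbox{\mathversion{bold}$\varphi $}_{ssss}(t)\|$, and combine.

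First I would set
\[
\widehat{E}(\mbox{\mathversion{bold}$\varphi $}(t)):=\|\mbox{\mathversion{bold}$\varphi $}_{sss}(t)\|^{2}-\frac{1}{R^{2}}\|\mbox{\mathversion{bold}$\varphi $}_{ss}(t)\|^{2}-\frac{5}{4}\int_{I_{\theta R}}\bigl(2\,\mbox{\mathversion{bold}$x$}^{R}_{ss}\!\cdot\!\mbox{\mathversion{bold}$\varphi $}_{ss}+|\mbox{\mathversion{bold}$\varphi $}_{ss}|^{2}\bigr)^{2}\,{\rm d}s
\]
and prove $\frac{{\rm d}}{{\rm d}t}\widehat{E}(\mbox{\mathversion{bold}$\varphi $}(t))=0$. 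This can be done exactly as in Proposition~\ref{cons}: differentiate in $t$, substitute (\ref{pert}), integrate by parts, and use the antisymmetry $\mbox{\mathversion{bold}$a$}\cdot(\mbox{\mathversion{bold}$b$}\times\mbox{\mathversion{bold}$a$})=0$ together with $\partial_{s}^{3}\mbox{\mathversion{bold}$x$}^{R}=-R^{-2}\mbox{\mathversion{bold}$x$}^{R}_{s}$ and $\partial_{s}^{4}\mbox{\mathversion{bold}$x$}^{R}=-R^{-2}\mbox{\mathversion{bold}$x$}^{R}_{ss}$, so that every derivative-losing top-order term cancels. The shape of the quartic correction is dictated by the following observation: up to an additive constant, up to the boundary quantity $\mbox{\mathversion{bold}$x$}^{R}_{s}\!\cdot\!\mbox{\mathversion{bold}$\varphi $}_{ss}\big|_{s=0}^{s=\theta R}$ (which is itself conserved, since the boundary condition on $\mbox{\mathversion{bold}$x$}_{s}$ forces $\mbox{\mathversion{bold}$e$}_{2}\!\cdot\!\mbox{\mathversion{bold}$\varphi $}_{ss}(0,t)$ and $\mbox{\mathversion{bold}$b$}\!\cdot\!\mbox{\mathversion{bold}$\varphi $}_{ss}(\theta R,t)$ to be time-independent, just as $\mbox{\mathversion{bold}$e$}_{2}\!\cdot\!\mbox{\mathversion{bold}$\varphi $}(0,t)$ was shown to be), and up to a multiple of the conserved $E(\mbox{\mathversion{bold}$\varphi $}(t))$ of Proposition~\ref{cons}, $\widehat{E}(\mbox{\mathversion{bold}$\varphi $}(t))$ equals $\int_{I_{\theta R}}\bigl(|\mbox{\mathversion{bold}$x$}_{sss}|^{2}-\tfrac{5}{4}|\mbox{\mathversion{bold}$x$}_{ss}|^{4}\bigr)\,{\rm d}s$, which (being, through the Hasimoto correspondence, the Hamiltonian of (\ref{NLS})) is one of the conserved quantities of the LIE. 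Its time-invariance along (\ref{slant3}) is verified by a direct computation: organizing it with the Frenet frame gives $\frac{{\rm d}}{{\rm d}t}\int_{I_{\theta R}}\bigl(|\mbox{\mathversion{bold}$x$}_{sss}|^{2}-\tfrac{5}{4}|\mbox{\mathversion{bold}$x$}_{ss}|^{4}\bigr)\,{\rm d}s=3\bigl[\kappa^{4}\tau\bigr]_{s=0}^{s=\theta R}$, and $\kappa^{4}\tau=|\mbox{\mathversion{bold}$x$}_{ss}|^{2}\,\mbox{\mathversion{bold}$x$}_{sss}\!\cdot\!(\mbox{\mathversion{bold}$x$}_{s}\times\mbox{\mathversion{bold}$x$}_{ss})$ vanishes at the endpoints because the first-order compatibility condition — preserved in time by the boundary condition on $\mbox{\mathversion{bold}$x$}_{s}$ — makes $\mbox{\mathversion{bold}$x$}_{sss}$ parallel to $\mbox{\mathversion{bold}$x$}_{s}$ there (the exact analogue of using $\mbox{\mathversion{bold}$\varphi $}_{s}=\mbox{\mathversion{bold}$0$}$ at the endpoints to kill the boundary term in the conservation of $\|\mbox{\mathversion{bold}$x$}_{ss}\|^{2}$).

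Granting $\widehat{E}(\mbox{\mathversion{bold}$\varphi $}(t))=\widehat{E}(\mbox{\mathversion{bold}$\varphi $}_{0})$, I would solve for $\|\mbox{\mathversion{bold}$\varphi $}_{sss}(t)\|^{2}$ and estimate the right-hand side. The term $R^{-2}\|\mbox{\mathversion{bold}$\varphi $}_{ss}(t)\|^{2}$ is $\le R^{-2}C_{0}^{2}\|\mbox{\mathversion{bold}$\varphi $}_{0ss}\|^{2}$ by Lemma~\ref{basic}; $\widehat{E}(\mbox{\mathversion{bold}$\varphi $}_{0})$ is a polynomial of degree $\le 4$ in $\|\mbox{\mathversion{bold}$\varphi $}_{0ss}\|_{2}$ with no constant term (using the Poincar\'e inequality for $\|\mbox{\mathversion{bold}$\varphi $}_{0s}\|$ and $\|f\|_{L^{\infty}(I_{\theta R})}\lesssim\|f\|_{H^{1}}$ for the quartic piece); and in the integral at time $t$ the only genuinely higher-order contribution, $\int_{I_{\theta R}}|\mbox{\mathversion{bold}$\varphi $}_{ss}(t)|^{4}\,{\rm d}s$, is estimated, by the Gagliardo--Nirenberg inequality $\|f\|_{L^{4}}^{4}\lesssim\|f\|_{L^{2}}^{3}\|f\|_{H^{1}}$ and Lemma~\ref{basic}, by $C\|\mbox{\mathversion{bold}$\varphi $}_{0ss}\|^{3}\bigl(\|\mbox{\mathversion{bold}$\varphi $}_{0ss}\|+\|\mbox{\mathversion{bold}$\varphi $}_{sss}(t)\|\bigr)$; Young's inequality on the single factor $\|\mbox{\mathversion{bold}$\varphi $}_{sss}(t)\|$ then produces $\varepsilon\|\mbox{\mathversion{bold}$\varphi $}_{sss}(t)\|^{2}$ plus a polynomial of degree $\le 6$ in $\|\mbox{\mathversion{bold}$\varphi $}_{0ss}\|_{2}$. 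Absorbing $\varepsilon\|\mbox{\mathversion{bold}$\varphi $}_{sss}(t)\|^{2}$ on the left yields $\|\mbox{\mathversion{bold}$\varphi $}_{sss}(t)\|^{2}\le C(\|\mbox{\mathversion{bold}$\varphi $}_{0ss}\|_{2}^{2}+\|\mbox{\mathversion{bold}$\varphi $}_{0ss}\|_{2}^{6})$, hence $\|\mbox{\mathversion{bold}$\varphi $}_{sss}(t)\|\le C(\|\mbox{\mathversion{bold}$\varphi $}_{0ss}\|_{2}+\|\mbox{\mathversion{bold}$\varphi $}_{0ss}\|_{2}^{3})$, uniformly in $t$. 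The estimate for $\|\mbox{\mathversion{bold}$\varphi $}_{ssss}(t)\|$, and therefore for $\|\mbox{\mathversion{bold}$\varphi $}_{sss}(t)\|_{1}$, is obtained by running the same scheme one order higher with the next conserved quantity of the hierarchy (whose leading part is $\|\mbox{\mathversion{bold}$\varphi $}_{ssss}\|^{2}-cR^{-2}\|\mbox{\mathversion{bold}$\varphi $}_{sss}\|^{2}$ plus quartic and sextic corrections in $\mbox{\mathversion{bold}$\varphi $}_{ss},\mbox{\mathversion{bold}$\varphi $}_{sss}$), now feeding in the bound for $\|\mbox{\mathversion{bold}$\varphi $}_{sss}(t)\|$ just obtained; one must organize the nonlinear remainders so that, after the Young absorption, the degree of the polynomial in $\|\mbox{\mathversion{bold}$\varphi $}_{0ss}\|_{2}$ does not exceed $3$ after taking the square root, which works because the LIE nonlinearity $\mbox{\mathversion{bold}$x$}_{s}\times\mbox{\mathversion{bold}$x$}_{ss}$ is quadratic and every remainder carries at least three factors that are already uniformly bounded by $\|\mbox{\mathversion{bold}$\varphi $}_{0ss}\|_{2}$.

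The main difficulty is getting a genuinely $t$-uniform bound: a plain energy estimate closes only through Gr\"onwall and gives exponential growth in $t$. What makes the argument work is that the relevant functional is \emph{exactly} conserved, so that $\|\mbox{\mathversion{bold}$\varphi $}_{sss}(t)\|^{2}$ is expressed as its value at $t=0$ plus quantities evaluated at time $t$ alone — never as a time-integral of a non-sign-definite density, which would accumulate linearly in $t$. Establishing the exact conservation is the delicate point: one must check that every boundary term produced by the integrations by parts either vanishes or is one of the conserved boundary quantities (which forces one to track the preserved first-order compatibility and the conservation of $\mbox{\mathversion{bold}$e$}_{2}\!\cdot\!\mbox{\mathversion{bold}$\varphi $}_{ss}(0,t)$, $\mbox{\mathversion{bold}$b$}\!\cdot\!\mbox{\mathversion{bold}$\varphi $}_{ss}(\theta R,t)$), and that those lower-order terms which \emph{look} like they should drift — e.g.\ $\int_{I_{\theta R}}\mbox{\mathversion{bold}$x$}^{R}_{s}\!\cdot\!\mbox{\mathversion{bold}$\varphi $}_{s}(t)\,{\rm d}s=-\tfrac12\|\mbox{\mathversion{bold}$\varphi $}_{s}(t)\|^{2}$ (from $|\mbox{\mathversion{bold}$x$}_{s}|=1$) — are in fact controlled, through $E(\mbox{\mathversion{bold}$\varphi $}(t))=E(\mbox{\mathversion{bold}$\varphi $}_{0})$, by $\|\mbox{\mathversion{bold}$\varphi $}_{ss}(t)\|^{2}$ alone. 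A secondary technical point is that, unlike $\mbox{\mathversion{bold}$\varphi $}_{s}$, the higher derivatives $\mbox{\mathversion{bold}$\varphi $}_{ss},\mbox{\mathversion{bold}$\varphi $}_{sss}$ do not vanish at the endpoints, so the Poincar\'e inequality may be applied only to $\mbox{\mathversion{bold}$\varphi $}_{s}$ (which is why the restriction $\theta<\pi$ enters only through Lemma~\ref{basic}); the negative-coefficient terms $-R^{-2}\|\mbox{\mathversion{bold}$\varphi $}_{ss}\|^{2}$ in the modified energies have to be moved to the data side rather than estimated by an eigenvalue inequality.
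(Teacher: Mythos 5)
Your proposal is correct and follows essentially the same route as the paper: the paper works with the conserved functionals $E_{1}$ and $E_{2}$ of $\mbox{\mathversion{bold}$v$}=\mbox{\mathversion{bold}$x$}_{s}$ (i.e.\ $\|\mbox{\mathversion{bold}$v$}_{ss}\|^{2}-\tfrac54\||\mbox{\mathversion{bold}$v$}_{s}|^{2}\|^{2}$ and its next-order analogue), decomposes them into the arc part, the pure $\mbox{\mathversion{bold}$\varphi$}$ part, and a cross term controlled via the non-stretching identity (\ref{nostretch}), the vanishing of $\mbox{\mathversion{bold}$x$}^{R}_{s}\cdot\mbox{\mathversion{bold}$\varphi$}_{ss}$ at the endpoints, interpolation, and Lemma \ref{basic} — which is exactly your $\widehat{E}$ written out before regrouping. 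Your packaging of the conserved quantity directly in terms of $\mbox{\mathversion{bold}$\varphi$}$, and the Gagliardo--Nirenberg/Young absorption of the quartic term, match the paper's computation in substance.
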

{\it Proof.}
We make use of conserved quantities for problem (\ref{slant3}), which was also utilized in \cite{23}. 
Set \( \mbox{\mathversion{bold}$v$}:=\mbox{\mathversion{bold}$x$}_{s}\). Then, we see that 
\( \mbox{\mathversion{bold}$v$}\) satisfies
\begin{align}
\left\{
\begin{array}{ll}
\mbox{\mathversion{bold}$v$}_{t}=\mbox{\mathversion{bold}$v$}\times \mbox{\mathversion{bold}$v$}_{ss},
& s\in I_{\theta R}, \ t>0, \\[3mm]
\mbox{\mathversion{bold}$v$}(s,0)=\mbox{\mathversion{bold}$x$}^{R}_{0s}(s)+\mbox{\mathversion{bold}$\varphi$}_{0s}(s), 
& s\in I_{\theta R}, \\[3mm]
\mbox{\mathversion{bold}$v$}(0,t)=\mbox{\mathversion{bold}$b$},
 \ \mbox{\mathversion{bold}$v$}(\theta R, t)=\mbox{\mathversion{bold}$e$}_{2}, 
 & t>0.
\end{array}\right.
\label{veq}
\end{align}
We immediately see that \( \frac{{\rm d}}{{\rm d}t}|\mbox{\mathversion{bold}$v$}|^{2} = 0 \), and 
from (A1) of Assumption \ref{as} we have
\begin{align*}
|\mbox{\mathversion{bold}$x$}^{R}_{s}(s,t)+\mbox{\mathversion{bold}$\varphi$}_{s}(s,t)|^{2}
=
|\mbox{\mathversion{bold}$x$}^{R}_{0s}(s)+\mbox{\mathversion{bold}$\varphi$}_{0s}(s)|^{2}
=
1.
\end{align*}
Since \( |\mbox{\mathversion{bold}$x$}^{R}_{s}(s,t)| = 1 \), we have
\begin{align}
2\mbox{\mathversion{bold}$x$}^{R}_{s}(s,t) \cdot \mbox{\mathversion{bold}$\varphi$}_{s}(s,t)
=
-|\mbox{\mathversion{bold}$\varphi$}_{s}(s,t)|^{2}
\label{nostretch}
\end{align}
for all \( s\in I_{\theta R}\) and \( t>0\).
We also have the following conserved quantities.
\begin{align*}
&\frac{{\rm d}}{{\rm d}t}\bigg\{
\| \mbox{\mathversion{bold}$v$}_{ss}\|^{2}-\frac{5}{4}
\big\| |\mbox{\mathversion{bold}$v$}_{s}|^{2}\big\|^{2}
\bigg\}
= 0, \\[3mm]
&\frac{{\rm d}}{{\rm d}t}\bigg\{
\|\mbox{\mathversion{bold}$v$}_{sss}\|^{2}
-\frac{7}{2}\big\| |\mbox{\mathversion{bold}$v$}_{s}|
|\mbox{\mathversion{bold}$v$}_{ss}|\big\|^{2}-
14\| \mbox{\mathversion{bold}$v$}_{s}\cdot \mbox{\mathversion{bold}$v$}_{ss}\|^{2}
+
\frac{21}{8}\big\|
|\mbox{\mathversion{bold}$v$}_{s}|^{3}\big\| ^{2}
\bigg\}= 0.
\end{align*}
The above can be checked by direct calculation. First, we set
\begin{align*}
E_{1}(\mbox{\mathversion{bold}$v$}(t)):=
\| \mbox{\mathversion{bold}$v$}_{ss}(t)\|^{2}-\frac{5}{4}\big\| |\mbox{\mathversion{bold}$v$}_{s}(t)|^{2}\big\|^{2}.
\end{align*}
For \( t\geq 0 \), we substitute \( \mbox{\mathversion{bold}$v$}=\mbox{\mathversion{bold}$x$}^{R}_{s}+
\mbox{\mathversion{bold}$\varphi$}_{s}\) and decompose as follows.
\begin{align}
E_{1}(\mbox{\mathversion{bold}$v$}(t)) = E_{1}(\mbox{\mathversion{bold}$x$}^{R}_{s}(t)) + 
E_{1}(\mbox{\mathversion{bold}$\varphi$}_{s}(t)) + R_{1}(\mbox{\mathversion{bold}$x$}^{R}_{s}(t),
\mbox{\mathversion{bold}$\varphi$}_{s}(t)),
\label{decom1}
\end{align}
where \( R_{1}(\mbox{\mathversion{bold}$x$}^{R}_{s}(t),
\mbox{\mathversion{bold}$\varphi$}_{s}(t)) \) is given by
\begin{align*}
R_{1}(\mbox{\mathversion{bold}$x$}^{R}_{s}(t),
\mbox{\mathversion{bold}$\varphi$}_{s}(t)) 
&=
2(\mbox{\mathversion{bold}$x$}^{R}_{sss}, \mbox{\mathversion{bold}$\varphi$}_{sss})
-5(|\mbox{\mathversion{bold}$x$}^{R}_{ss}|^{2}\mbox{\mathversion{bold}$x$}^{R}_{ss},\mbox{\mathversion{bold}$\varphi$}_{ss})
-5((\mbox{\mathversion{bold}$x$}^{R}_{ss}\cdot\mbox{\mathversion{bold}$\varphi$}_{ss})
  \mbox{\mathversion{bold}$x$}^{R}_{ss},\mbox{\mathversion{bold}$\varphi$}_{ss})\\[3mm]
& \qquad  -
\frac{5}{2}(|\mbox{\mathversion{bold}$x$}^{R}_{ss}|^{2}\mbox{\mathversion{bold}$\varphi$}_{ss},\mbox{\mathversion{bold}$\varphi$}_{ss})
-
5(|\mbox{\mathversion{bold}$\varphi$}_{ss}|^{2}\mbox{\mathversion{bold}$x$}^{R}_{ss},\mbox{\mathversion{bold}$\varphi$}_{ss}).
\end{align*}
The terms that are linear with respect to \( \mbox{\mathversion{bold}$\varphi$} \) can be estimated as follows. First we have
\begin{align*}
2(\mbox{\mathversion{bold}$x$}^{R}_{sss},\mbox{\mathversion{bold}$\varphi$}_{sss})
=
-\frac{2}{R^{2}}(\mbox{\mathversion{bold}$x$}^{R}_{s},\mbox{\mathversion{bold}$\varphi$}_{sss})
&=
\frac{2}{R^{2}}(\mbox{\mathversion{bold}$x$}^{R}_{ss},\mbox{\mathversion{bold}$\varphi$}_{ss})
-\frac{2}{R^{2}}\big[ \mbox{\mathversion{bold}$x$}^{R}_{s}\cdot \mbox{\mathversion{bold}$\varphi$}_{ss}\big]^{\theta R}_{s=0}\\[3mm]
&=
-\frac{2}{R^{2}}(\mbox{\mathversion{bold}$x$}^{R}_{sss},\mbox{\mathversion{bold}$\varphi$}_{s})
-\frac{2}{R^{2}}\big[ \mbox{\mathversion{bold}$x$}^{R}_{s}\cdot \mbox{\mathversion{bold}$\varphi$}_{ss}\big]^{\theta R}_{s=0}\\[3mm]
&=
\frac{2}{R^{4}}(\mbox{\mathversion{bold}$x$}^{R}_{s},\mbox{\mathversion{bold}$\varphi$}_{s})
-\frac{2}{R^{2}}\big[ \mbox{\mathversion{bold}$x$}^{R}_{s}\cdot \mbox{\mathversion{bold}$\varphi$}_{ss}\big]^{\theta R}_{s=0}
\end{align*}
where integration by parts, \( \mbox{\mathversion{bold}$\varphi$}_{s}|_{s=0,L}=\mbox{\mathversion{bold}$0$}\), and
\( \mbox{\mathversion{bold}$x$}^{R}_{sss}=-\frac{1}{R^{2}}\mbox{\mathversion{bold}$x$}^{R}_{s}\) was used.
Integrating equation (\ref{nostretch}) with respect to \( s\) shows that
 \( (\mbox{\mathversion{bold}$x$}^{R}_{s},\mbox{\mathversion{bold}$\varphi$}_{s})
= -\frac{1}{2}\|\mbox{\mathversion{bold}$\varphi$}_{s}\|^{2}\).
Furthermore, differentiating equation (\ref{nostretch}) with respect to \( s\) and taking the trace at \( s=0,\theta R\) yields
\( \mbox{\mathversion{bold}$x$}^{R}_{s}\cdot \mbox{\mathversion{bold}$\varphi$}_{ss}|_{s=0,\theta R}=0\).
Hence we have
\begin{align*}
2(\mbox{\mathversion{bold}$x$}^{R}_{sss},\mbox{\mathversion{bold}$\varphi$}_{sss})
=
-\frac{1}{R^{4}}\|\mbox{\mathversion{bold}$\varphi$}_{s}\|^{2}.
\end{align*}
Similarly, we have
\begin{align*}
-5(|\mbox{\mathversion{bold}$x$}^{R}_{ss}|^{2}\mbox{\mathversion{bold}$x$}^{R}_{ss},\mbox{\mathversion{bold}$\varphi$}_{ss})
=
-\frac{5}{R^{2}}(\mbox{\mathversion{bold}$x$}^{R}_{ss},\mbox{\mathversion{bold}$\varphi$}_{ss})
=
\frac{5}{2R^{4}}\|\mbox{\mathversion{bold}$\varphi$}_{s}\|^{2},
\end{align*}
where \( |\mbox{\mathversion{bold}$x$}^{R}_{ss}|=\frac{1}{R}\) was used.
From these equalities, we have  
\begin{align}
| R_{1}(\mbox{\mathversion{bold}$x$}^{R}_{s}(t),
\mbox{\mathversion{bold}$\varphi$}_{s}(t)) |
\leq 
C\big( \|\mbox{\mathversion{bold}$\varphi$}_{s}(t)\|^{2}_{1} + 
\big\| |\mbox{\mathversion{bold}$\varphi$}_{ss}(t)|^{3/2}\big\|^{2} \big),
\label{H3}
\end{align}
where \( C>0\) depends only on \( R\).
In addition, we have \( \mbox{\mathversion{bold}$x$}^{R}_{s}(s,t)=\mbox{\mathversion{bold}$x$}^{R}_{0s}(s)\)
from the definition of \( \mbox{\mathversion{bold}$x$}^{R}\). This, along with the conservation of 
\( E_{1}(\mbox{\mathversion{bold}$v$})\) and the decomposition (\ref{decom1}), yields 
\begin{align*}
E_{1}(\mbox{\mathversion{bold}$\varphi$}_{s}(t)) 
= E_{1}(\mbox{\mathversion{bold}$\varphi$}_{0s})
-
R_{1}(\mbox{\mathversion{bold}$x$}^{R}_{0s},\mbox{\mathversion{bold}$\varphi$}_{s})
+
R_{1}(\mbox{\mathversion{bold}$x$}^{R}_{0s},\mbox{\mathversion{bold}$\varphi$}_{0s}). \nonumber
\end{align*}
From the interpolation inequality, Lemma \ref{basic}, and inequality (\ref{H3}), the above equation for
\( E_{1}(\mbox{\mathversion{bold}$\varphi$}_{s}(t))\) shows that
\begin{align*}
\| \mbox{\mathversion{bold}$\varphi$}_{sss}(t)\|^{2}
\leq
C\big( \|\mbox{\mathversion{bold}$\varphi$}_{0ss}\|^{2}_{1} + \|\mbox{\mathversion{bold}$\varphi$}_{0ss}\|^{4}_{1}\big)
\end{align*}
holds for all \( t>0\), where \( C>0\) depends on \( R\) and \( \theta \). 
Similarly, if we set
\begin{align*}
E_{2}(\mbox{\mathversion{bold}$v$}(t))
=
\|\mbox{\mathversion{bold}$v$}_{sss}(t)\|^{2}
-\frac{7}{2}\big\| |\mbox{\mathversion{bold}$v$}_{s}(t)|
|\mbox{\mathversion{bold}$v$}_{ss}(t)|\big\|^{2}-
14\| \mbox{\mathversion{bold}$v$}_{s}(t)\cdot \mbox{\mathversion{bold}$v$}_{ss}(t)\|^{2}
+
\frac{21}{8}\big\|
|\mbox{\mathversion{bold}$v$}_{s}(t)|^{3}\big\|^{2},
\end{align*}
and proceed in the same way as \( E_{1}(\mbox{\mathversion{bold}$v$}(t))\), we obtain
\begin{align*}
\|\partial^{4}_{s}\mbox{\mathversion{bold}$\varphi$}(t)\|^{2}
\leq
C\big( \|\mbox{\mathversion{bold}$\varphi$}_{0ss}\|^{2}_{2}+\|\mbox{\mathversion{bold}$\varphi$}_{0ss}\|^{6}_{2}\big)
\end{align*}
and this finishes the proof. \hfill \( \Box \).

\bigskip

Finally, we derive estimates for \( \mbox{\mathversion{bold}$\varphi$}\) itself.
\begin{lm}
\( \mbox{\mathversion{bold}$\varphi$}(s,t)\) given in Theorem {\rm \ref{th1}}
 satisfies
\begin{align*}
\|{}^{t}(\varphi_{1}(t),\varphi_{2}(t))\|
\leq
C\|\mbox{\mathversion{bold}$\varphi$}_{0ss}\|
\end{align*}
for all \( t>0\), and 
\begin{align*}
\|\varphi_{3}(t)\| \leq C
\big( \| \varphi_{0,3}\|+\|\mbox{\mathversion{bold}$\varphi$}_{0ss}\|
+\|\mbox{\mathversion{bold}$\varphi$}_{0ss}\|^{2}t \big)
\end{align*}
for all \( t>0\).
Here, \( C>0\) depends only on \( \theta \) and \( R\).
\label{raw}
\end{lm}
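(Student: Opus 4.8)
\medskip

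\noindent{\it Proof plan.} By Lemma~\ref{basic} the derivatives of $\varphi$ are already controlled, $\|\varphi_s(t)\|_1\le C_0\|\varphi_{0ss}\|$, so in particular $\|\varphi_s(t)\|\le C_0\|\varphi_{0ss}\|$ and $\|\varphi_{ss}(t)\|\le C_0\|\varphi_{0ss}\|$ for every $t>0$. What remains is to bound $\varphi$ itself, and the plan is to do this with two ingredients: the endpoint relations that stay conserved in time thanks to Assumption~\ref{as}, and the non-stretching identity (\ref{nostretch}).

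\medskip

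\noindent\emph{The in-plane components.} First I would write, for $j=1,2$, $\varphi_j(s,t)=\varphi_j(0,t)+\int_0^s\varphi_{js}(r,t)\,{\rm d}r$, which is legitimate by the regularity asserted in Theorem~\ref{th1}. Cauchy--Schwarz gives $\|\varphi_j(t)-\varphi_j(0,t)\|\le\theta R\,\|\varphi_{js}(t)\|\le C\|\varphi_{0ss}\|$, so only the traces $\varphi_1(0,t)$ and $\varphi_2(0,t)$ have to be estimated. As computed right after (\ref{pert}), $\mathbf{e}_2\cdot\varphi(0,t)$ and $\mathbf{b}\cdot\varphi(\theta R,t)$ are independent of $t$; combined with (A3) of Assumption~\ref{as} this yields $\varphi_2(0,t)=0$ and $\mathbf{b}\cdot\varphi(\theta R,t)=0$ for all $t>0$. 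Writing $\mathbf{b}={}^t(-\sin\theta,\cos\theta,0)$, substituting $\varphi_1(\theta R,t)=\varphi_1(0,t)+\int_0^{\theta R}\varphi_{1s}\,{\rm d}r$ and $\varphi_2(\theta R,t)=\int_0^{\theta R}\varphi_{2s}\,{\rm d}r$ into $\mathbf{b}\cdot\varphi(\theta R,t)=0$, and dividing by $\sin\theta>0$, one gets $|\varphi_1(0,t)|\le C\|\varphi_s(t)\|\le C\|\varphi_{0ss}\|$. This is the only point at which $\theta\in(0,\pi)$ is needed; for $\theta\in[\pi,2\pi)$ this step is replaced by the reflection symmetry of Definition~\ref{sym}. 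Altogether $\|{}^t(\varphi_1(t),\varphi_2(t))\|\le C\|\varphi_{0ss}\|$.

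\medskip

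\noindent\emph{The axial component.} Integrating the third component of (\ref{pert}) directly in $t$ is too crude, since the linear term $x^R_s\times\varphi_{ss}$ is only $O(\|\varphi_{0ss}\|)$ and would yield an $O(\|\varphi_{0ss}\|\,t)$ bound instead of the sharp $O(\|\varphi_{0ss}\|^2 t)$. The plan is to integrate in $s$ first. From $\partial_s(x^R_s\times\varphi_s)=x^R_{ss}\times\varphi_s+x^R_s\times\varphi_{ss}$ one obtains
\[
x^R_s\times\varphi_{ss}+\varphi_s\times x^R_{ss}=\partial_s(x^R_s\times\varphi_s)+2\,\varphi_s\times x^R_{ss},
\]
and, since $x^R_{ss}=-\tfrac1R\,{}^t(\cos\tfrac sR,\sin\tfrac sR,0)$ with ${}^t(\cos\tfrac sR,\sin\tfrac sR,0)\times x^R_s=\mathbf{e}_3$, the third component of the last term is $\tfrac1R\,x^R_s\cdot\varphi_s$, which by (\ref{nostretch}) equals $-\tfrac1{2R}|\varphi_s|^2$, a quantity quadratic in $\varphi_s$. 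Integrating the third component of (\ref{pert}) over $I_{\theta R}$, the exact-derivative term contributes $[(x^R_s\times\varphi_s)_3]_{s=0}^{s=\theta R}=0$ because $\varphi_s$ vanishes at both endpoints, leaving
\[
\frac{{\rm d}}{{\rm d}t}\int_0^{\theta R}\varphi_3\,{\rm d}s=\int_0^{\theta R}(\varphi_s\times\varphi_{ss})_3\,{\rm d}s-\frac1R\|\varphi_s\|^2 ,
\]
whose right-hand side is $\le C\|\varphi_{0ss}\|^2$ by Lemma~\ref{basic}. Integrating in $t$ and using Cauchy--Schwarz for the initial integral gives $\big|\int_0^{\theta R}\varphi_3(s,t)\,{\rm d}s\big|\le\sqrt{\theta R}\,\|\varphi_{0,3}\|+C\|\varphi_{0ss}\|^2 t$. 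Finally I would split $\varphi_3=\bar\varphi_3(t)+(\varphi_3-\bar\varphi_3(t))$ with $\bar\varphi_3(t)=(\theta R)^{-1}\int_0^{\theta R}\varphi_3\,{\rm d}s$: the mean term contributes $\|\bar\varphi_3(t)\|\le\|\varphi_{0,3}\|+C\|\varphi_{0ss}\|^2 t$, while the Poincar\'e--Wirtinger inequality on $I_{\theta R}$ gives $\|\varphi_3(t)-\bar\varphi_3(t)\|\le C\|\varphi_{3s}(t)\|\le C\|\varphi_s(t)\|\le C\|\varphi_{0ss}\|$. Adding these proves the bound for $\varphi_3$, with $C$ depending only on $\theta$ and $R$.

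\medskip

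\noindent The real obstacle is the $\varphi_3$ estimate, specifically the requirement that the time-growth coefficient be the quadratically small $\|\varphi_{0ss}\|^2$ rather than $\|\varphi_{0ss}\|$. This is what forces one to pass to the $s$-average (so that the exact-derivative part of the linear terms cancels through the boundary condition $\varphi_s=0$) and to invoke the non-stretching constraint (\ref{nostretch}), which reveals that the surviving linear term $2(\varphi_s\times x^R_{ss})_3$ is in fact quadratic in $\varphi_s$. With these two observations the rest is bookkeeping with Lemma~\ref{basic} and elementary Poincar\'e-type inequalities.
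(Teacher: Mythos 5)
Your proposal is correct, and it splits naturally into two halves of different character. The axial component is handled exactly as in the paper: differentiate $\int_0^{\theta R}\varphi_3\,{\rm d}s$ in time, observe that after integration by parts (using $\varphi_s|_{s=0,\theta R}=0$) the linear contribution reduces to $\tfrac{2}{R}\int x^R_s\cdot\varphi_s\,{\rm d}s$, which the non-stretching identity (\ref{nostretch}) converts into $-\tfrac1R\|\varphi_s\|^2$, and then combine the resulting bound on the mean with the Poincar\'e--Wirtinger inequality and Lemma \ref{basic}. For the in-plane components, however, you take a genuinely different and more elementary route. The paper extends the perturbed filament by reflection across the lower plane, invokes uniqueness for the extended problem to show the reflection symmetry persists in time, deduces that the extended $\varphi_2$ is odd (hence mean-zero) on the doubled interval, and applies the Poincar\'e inequality; it then repeats the construction at the upper plane to control $\varphi\cdot\mbox{\mathversion{bold}$b$}$ and solves for $\varphi_1$. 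You instead use only the conserved traces $\varphi_2(0,t)=0$ and $\mbox{\mathversion{bold}$b$}\cdot\mbox{\mathversion{bold}$\varphi$}(\theta R,t)=0$ (which the paper itself establishes right after (\ref{pert}) from Assumption \ref{as}(A3)) together with the fundamental theorem of calculus and Cauchy--Schwarz; both arguments end by dividing by $\sin\theta$, so both constants degenerate as $\theta\to\pi$ in the same way. What your shortcut gives up is only a by-product: the reflection-extension machinery is reused verbatim in the proof of Theorem \ref{th2} and in the decomposition of the circular filament in Section 4, so the paper gets those applications for free, whereas your argument proves the present lemma with less overhead.
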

{\it Proof}. We make use of the reflection symmetry of the arc-shaped filament and the invariance of the equation with 
respect to this symmetry. More specifically, for the perturbed initial filament \( \mbox{\mathversion{bold}$x$}_{0}\)
\( (=\mbox{\mathversion{bold}$x$}^{R}_{0}+\mbox{\mathversion{bold}$\varphi$}_{0})\),
we define an extended filament \( \tilde{\mbox{\mathversion{bold}$x$}}_{0}\) by
\begin{align*}
\tilde{\mbox{\mathversion{bold}$x$}}_{0}(s)
=
\left\{
\begin{array}{ll}
\mbox{\mathversion{bold}$x$}_{0}(s), & s\in [0,\theta R), \\[3mm]
{}^{t}(x_{0,1}(-s),-x_{0,2}(-s),x_{0,3}(-s)), & s\in (-\theta R, 0),
\end{array}\right.
\end{align*}
which is an extension made by reflecting the filament with respect to the plane 
\( \xi_{2}=0\). We further define the operator \( {\rm T}\) for vector valued functions 
\( \mbox{\mathversion{bold}$y$}(s)\) defined for \( s\in (-\theta R, \theta R)\) by
\begin{align*}
({\rm T}\mbox{\mathversion{bold}$y$})(s) = 
{}^{t}(y_{1}(-s),-y_{2}(-s),y_{3}(-s)).
\end{align*}
Then, from (A2) and (A3) of Assumption \ref{as}, we see
that \( \tilde{\mbox{\mathversion{bold}$x$}}_{0}\in H^{4}\big( (-\theta R,\theta R)\big)\)
and
\begin{align*}
{\rm T}\tilde{\mbox{\mathversion{bold}$x$}}_{0}= \tilde{\mbox{\mathversion{bold}$x$}}_{0}
\end{align*}
from the definition of \( \tilde{\mbox{\mathversion{bold}$x$}}_{0}\).
Moreover, we have
\begin{align*}
{\rm T}(\mbox{\mathversion{bold}$y$}_{s}\times \mbox{\mathversion{bold}$y$}_{ss})
&=
({\rm T}\mbox{\mathversion{bold}$y$})_{s}\times ({\rm T}\mbox{\mathversion{bold}$y$})_{ss}.
\end{align*}
Hence, if we consider the auxiliary problem
\begin{align}
\left\{
\begin{array}{ll}
\mbox{\mathversion{bold}$w$}_{t}=\mbox{\mathversion{bold}$w$}_{s}\times \mbox{\mathversion{bold}$w$}_{ss},
& s\in \tilde{I}, t>0, \\[3mm]
\mbox{\mathversion{bold}$w$}(s,0)=\tilde{\mbox{\mathversion{bold}$x$}}_{0}(s), & s\in \tilde{I}, \\[3mm]
\mbox{\mathversion{bold}$w$}_{s}(-\theta R, t)= \tilde{\mbox{\mathversion{bold}$b$}}, \ 
\mbox{\mathversion{bold}$w$}_{s}(\theta R,t)= \mbox{\mathversion{bold}$b$}, & t>0,
\end{array}\right.
\label{aux}
\end{align}
where \( \tilde{\mbox{\mathversion{bold}$b$}}={}^{t}(b_{1},-b_{2},0)\) and
\( \tilde{I}=(-\theta R, \theta R)\), we see that 
if \( \mbox{\mathversion{bold}$w$}\) is a solution, so is \( {\rm T}\mbox{\mathversion{bold}$w$}\).
The unique solvability of (\ref{aux}) follows from \cite{23} and in particular, 
the uniqueness of the solution implies \( 
{\rm T}\mbox{\mathversion{bold}$w$} = \mbox{\mathversion{bold}$w$}\).
In other words, the reflection symmetry is preserved throughout the motion.
Since the arc-shaped filament \( \mbox{\mathversion{bold}$x$}^{R}(s,t)\) can also be extended 
by reflection with preservation of smoothness, the extended arc-shaped filament
\( \tilde{\mbox{\mathversion{bold}$x$}}^{R} \) satisfies
\( {\rm T}\tilde{\mbox{\mathversion{bold}$x$}}^{R} = \tilde{\mbox{\mathversion{bold}$x$}}^{R}\) and 
hence,
\( \tilde{\mbox{\mathversion{bold}$\varphi$}}:= \mbox{\mathversion{bold}$w$}-
\tilde{\mbox{\mathversion{bold}$x$}}^{R}\) also satisfies 
\( {\rm T}\tilde{\mbox{\mathversion{bold}$\varphi$}} = \tilde{\mbox{\mathversion{bold}$\varphi$}}\).
Finally, since \( {\rm T}\mbox{\mathversion{bold}$w$} = \mbox{\mathversion{bold}$w$}\), we see that 
\( \mbox{\mathversion{bold}$w$}\big|_{I_{\theta R}}\), the restriction of \( \mbox{\mathversion{bold}$w$}\) 
to \( I_{\theta R}\), is a solution of 
(\ref{slant3}) and again by uniqueness, \( \mbox{\mathversion{bold}$w$}\big|_{I_{\theta R}} = \mbox{\mathversion{bold}$x$}\),
and as a consequence, \( \tilde{\mbox{\mathversion{bold}$\varphi$}}\big|_{I_{\theta R}} = 
\mbox{\mathversion{bold}$\varphi$}\).

From \( {\rm T}\tilde{\mbox{\mathversion{bold}$\varphi$}} = \tilde{\mbox{\mathversion{bold}$\varphi$}}\), 
we know that \( \tilde{\varphi}_{2}(s,t)\) is an odd function with respect to \( s=0 \), thus,
\begin{align*}
\int^{\theta R}_{-\theta R}\tilde{\varphi}_{2}(s,t){\rm d}s = 0
\end{align*}
for all \( t>0\). Hence from the Poincar\'e inequality and 
Lemma \ref{basic}, we have
\begin{align*}
\| \tilde{\varphi}_{2}(t)\|_{L^{2}(\tilde{I})}
 \leq \frac{\theta R}{\pi }\|\tilde{\varphi}_{2s}(t)\|_{L^{2}(\tilde{I})}
= \frac{2\theta R}{\pi}\|\varphi_{2s}(t)\|
\leq
\frac{2\theta R}{\pi }C_{0}\|\mbox{\mathversion{bold}$\varphi$}_{0ss}\|.
\end{align*}
Here, \( C_{0}>0\) is the constant obtained in Lemma \ref{basic}.
Then, the estimate for \( \varphi_{2}\) follows from
\( \| \tilde{\varphi}_{2}(t)\|_{L^{2}(\tilde{I})} =
2\| \varphi_{2}(t)\| \). Similarly, we can extend the filament by reflection 
at the upper plane, rotate the filament, and apply the same arguments to 
obtain 
\begin{align*}
\| \varphi^{b}(t)\| \leq \frac{\theta R}{\pi }C_{0}\|\mbox{\mathversion{bold}$\varphi$}_{0ss}\|,
\end{align*}
for all \( t>0\), where \( \varphi^{b}= \mbox{\mathversion{bold}$\varphi$}\cdot 
\mbox{\mathversion{bold}$b$}\). 
From
\begin{align*}
\varphi_{1}=-\frac{1}{\sin \theta }\varphi^{b} + \frac{\cos \theta}{\sin \theta}
\varphi_{2},
\end{align*}
we obtain the estimate for \( \varphi_{1}\).

Finally, we estimate \( \varphi_{3}\). From \ref{pert}, we see that \( \varphi_{3}\)
satisfies
\begin{align}
\varphi_{3t}=\varphi_{1s}\varphi_{2ss}-\varphi_{2s}\varphi_{1ss} -
\varphi_{2ss}\sin(\frac{s}{R}) - \varphi_{1ss}\cos(\frac{s}{R})
+
\frac{1}{R}\varphi_{2s}\cos (\frac{s}{R}) - \frac{1}{R}\varphi_{1s}\sin(\frac{s}{R}).
\label{eq3}
\end{align}
We investigate the integral mean of \( \varphi_{3}\). We have
\begin{align*}
\frac{{\rm d}}{{\rm d}t}\int^{\theta R}_{0}\varphi_{3}(s,t){\rm d}s
&=
(\varphi_{1s},\varphi_{2ss})-(\varphi_{2s},\varphi_{1ss})
-
(\varphi_{2ss},\sin(\frac{s}{R})) - (\varphi_{1ss},\cos(\frac{s}{R}))\\[3mm]
& \qquad +
\frac{1}{R}(\varphi_{2s},\cos(\frac{s}{R}))-\frac{1}{R}(\varphi_{1s},\sin(\frac{s}{R}))\\[3mm]
& =
(\varphi_{1s},\varphi_{2ss})-(\varphi_{2s},\varphi_{1ss})
+
\frac{2}{R}(\varphi_{2s},\cos(\frac{s}{R}))
-\frac{2}{R}(\varphi_{1s},\sin(\frac{s}{R}))\\[3mm]
&=
(\varphi_{1s},\varphi_{2ss})-(\varphi_{2s},\varphi_{1ss})
+
\frac{2}{R}\int^{\theta R}_{0}
\mbox{\mathversion{bold}$x$}^{R}_{s}(s,t)\cdot \mbox{\mathversion{bold}$\varphi$}_{s}(s,t)
{\rm d}s. \\[3mm]
&=
(\varphi_{1s},\varphi_{2ss})-(\varphi_{2s},\varphi_{1ss})
-\frac{1}{R}\|\mbox{\mathversion{bold}$\varphi$}_{s}(t)\|^{2},
\end{align*}
where we substituted (\ref{nostretch}) in the last equality.
Integrating with respect to \( t\) and applying the estimate in 
Lemma \ref{basic} yields
\begin{align}
\label{opt}
\big|
\int^{\theta R}_{0}\varphi_{3}(s,t){\rm d}s \big|
&\leq
\big|
\int^{\theta R}_{0}\varphi_{03}(s){\rm d}s \big|
+
C\|\mbox{\mathversion{bold}$\varphi$}_{0ss}\|^{2}t, \\[3mm]
& \leq C\big(
\|\varphi_{0,3}\| + \| \mbox{\mathversion{bold}$\varphi$}_{0ss}\|^{2}t\big)
\nonumber
\end{align}
where \( C>0\) depends only on \( \theta \) and \( R\). 
Furthermore, from the Poincar\'e--Wirtinger inequality, we have
\begin{align}
\|\varphi_{3}(t)-\overline{\varphi}_{3}(t)\|
\leq
C\|\varphi_{3s}(t)\|,
\label{poincare}
\end{align}
for all \( t>0\), where
\begin{align*}
\overline{\varphi}_{3}(t)=\frac{1}{\theta R}\int^{\theta R}_{0}
\varphi_{3}(s,t){\rm d}s.
\end{align*}
Combining (\ref{opt}), (\ref{poincare}), and Lemma \ref{basic}, we finally obtain
\begin{align*}
\| \varphi_{3}(t)\| \leq 
\frac{1}{\theta R}\big| \int^{\theta R}_{0}
\varphi_{3}(s,t){\rm d}s\big|
+
C\|\varphi_{3s}(t)\|
\leq
C\big( \| \varphi_{0,3}\|+ \| \mbox{\mathversion{bold}$\varphi$}_{0ss}\|
+\| \mbox{\mathversion{bold}$\varphi$}_{0ss}\|^{2}t\big)
\end{align*}
for all \(t>0\), and this finishes the proof of
Theorem \ref{th1}. \hfill \( \Box \).

%
%
%
%---------------------------------------------------------------------------------
%---------------------------------------------------------------------------------
%
%
\subsection{Proof of Theorem \ref{thopt}}
Proving the optimality of the estimate for \( \varphi_{3}\) is very 
straight forward. We construct a very specific initial perturbation 
which causes the perturbation to grow linearly with respect to time.

Let \( n\) be an arbitrary positive integer. 
For \( R>0\) and \( \theta \in (0,\pi) \), choose \( R_{n}>0 \) 
such that 
\begin{align*}
(2\pi n + \theta )R_{n}  = R \theta,
\end{align*}
in other words, \( \displaystyle R_{n}=\frac{R\theta}{2\pi n + \theta} \).
Next, we define a series of initial perturbations
\( \{ \mbox{\mathversion{bold}$\phi$}_{n}\}_{n=1}^{\infty} \)
by
\begin{align*}
\mbox{\mathversion{bold}$\phi$}_{n}(s)=
{}^{t}\big( R_{n}\cos (\frac{s}{R_{n}})-R\cos (\frac{s}{R}), \
R_{n}\sin (\frac{s}{R_{n}}) - R\sin(\frac{s}{R}), \
0 \big).
\end{align*}
Perturbing the initial arc-shaped filament by 
\(\mbox{\mathversion{bold}$\phi$}_{n}\)
corresponds to looping the arc into a circle with radius \( R_{n}\),
\( n\) times, before setting the end-points of the filament on the 
lower and upper planes.
We are exploiting the fact that vortex filaments described by the 
Localized Induction Equation can self-intersect, as well as 
intersect with other objects without consequences.  

We see from direct calculation that the unique solution 
\( \mbox{\mathversion{bold}$x$}_{n}\) of problem (\ref{slant3})
with initial datum \( \mbox{\mathversion{bold}$x$}^{R}_{0}+
\mbox{\mathversion{bold}$\phi$}_{n}\)
is explicitly given by
\begin{align*}
\mbox{\mathversion{bold}$x$}_{n}(s,t)=
{}^{t}\big( R_{n}\cos (\frac{s}{R_{n}}), \
R_{n}\sin (\frac{s}{R_{n}}), \
\frac{t}{R_{n}} \big).
\end{align*}
Hence, we have 
\begin{align*}
|x_{n,3}(s,t)-x^{R}_{3}(s,t)|
=\Big( \frac{1}{R_{n}}-\frac{1}{R}\Big)t
=
\frac{2\pi n t}{R\theta },
\end{align*}
which proves Theorem \ref{thopt}. \hfill \( \Box \)

\subsection{Remark on Theorem \ref{th2}}
Theorem \ref{th2} follows immediately from Theorem \ref{th1}.
Indeed, for \( R>0\) and \( \theta \in [\pi, 2\pi )\), let us consider the 
solution \( \mbox{\mathversion{bold}$x$}\) of the problem
(\ref{slant3}). If the initial perturbation 
is symmetric with respect to \( s=\frac{\theta R}{2}\) as defined in 
Definition \ref{sym}, we see that the perturbed initial filament has reflection symmetry 
with respect to \( s=\frac{\theta R}{2}\), and hence, the solution 
\( \mbox{\mathversion{bold}$x$}\) and the difference \( \mbox{\mathversion{bold}$\varphi$}=
\mbox{\mathversion{bold}$x$}-\mbox{\mathversion{bold}$x$}^{R} \) also has this 
symmetry. Following the arguments in the proof of 
Lemma \ref{raw}, we see that \( \mbox{\mathversion{bold}$x$}|
_{I_{\frac{\theta R}{2}}}\) is the solution of problem (\ref{slant3}) with 
\( \theta \) replaced with \( \frac{\theta }{2}\).
Since \( \frac{\theta }{2}\in (0, \pi )\), 
\( \mbox{\mathversion{bold}$\varphi$}|_{I_{\frac{\theta R}{2}}} \) 
satisfies all the estimates stated in Theorem \ref{th1}, from which the 
estimates for \( \mbox{\mathversion{bold}$\varphi$}\) follows.
%

%
%---------------------------------------------------------------------------------
%---------------------------------------------------------------------------------
%
%
%
\section{Long-time Behaviour of a Circular Vortex Filament}
\setcounter{equation}{0}
In this section, we utilize our estimates for the 
arc-shaped filament to prove estimates for a 
circular vortex filament.
To this end, we consider the following initial value problem
describing the motion of a perturbed circular vortex filament.
\begin{align}
\left\{
\begin{array}{ll}
\mbox{\mathversion{bold}$x$}_{t}=\mbox{\mathversion{bold}$x$}_{s}
\times \mbox{\mathversion{bold}$x$}_{ss}, & s\in \mathbf{T}, \ t>0, \\[3mm]
\mbox{\mathversion{bold}$x$}(s,0)=\mbox{\mathversion{bold}$x$}^{R}_{0}(s)
+ \mbox{\mathversion{bold}$\varphi$}_{0}(s), & s\in \mathbf{T},
\end{array}\right.
\label{ring}
\end{align}
where \( \mathbf{T}=\mathbf{R}/[0,L] \) and \( L>0\) is the circumference
of the initial circular vortex filament 
\( \mbox{\mathversion{bold}$x$}^{R}_{0}= 
{}^{t}\big( R\cos (\frac{s}{R}), \ R\sin (\frac{s}{R}), \ 0\big) \).
Note that we have used the same notation as the initial arc-shaped filament 
because the expression is the same and only the domain of the function is 
different.
Similar to the analysis of the arc-shaped filament, we set
\( \mbox{\mathversion{bold}$\varphi$}(s,t)=\mbox{\mathversion{bold}$x$}(s,t)
- \mbox{\mathversion{bold}$x$}^{R}(s,t)\) and investigate 
\( \mbox{\mathversion{bold}$\varphi$} \).
Here, \( \mbox{\mathversion{bold}$x$}^{R}(s,t)=
{}^{t}\big( R\cos (\frac{s}{R}), \ R\sin (\frac{s}{R}), \ \frac{t}{R}\big) \)
as before.
We see that \( \mbox{\mathversion{bold}$\varphi $}\) satisfies
\begin{align}
\left\{
\begin{array}{ll}
\mbox{\mathversion{bold}$\varphi $}_{t}=
\mbox{\mathversion{bold}$\varphi $}_{s}\times \mbox{\mathversion{bold}$\varphi $}_{ss}
+
\mbox{\mathversion{bold}$x$}^{R}_{s}\times \mbox{\mathversion{bold}$\varphi $}_{ss}
+
\mbox{\mathversion{bold}$\varphi $}_{s}\times \mbox{\mathversion{bold}$x$}^{R}_{ss},
& s\in \mathbf{T}, \ t>0, \\[3mm]
\mbox{\mathversion{bold}$\varphi $}(s,0)=\mbox{\mathversion{bold}$\varphi $}_{0}(s),
& s\in \mathbf{T}.
\end{array}\right.
\label{ringpert}
\end{align}
The time-global solvability of problem (\ref{ring}) is proved by 
Tani and Nishiyama in \cite{30}. Hence, we have the following existence
theorem.
\begin{Th}
For any \( \mbox{\mathversion{bold}$\varphi$}_{0}\in H^{4}(\mathbf{T})\)
satisfying \( |\mbox{\mathversion{bold}$x$}^{R}_{0s}(s)
+ \mbox{\mathversion{bold}$\varphi$}_{0s}(s)|=1 \) for all 
\( s\in \mathbf{T}\),
there exists a unique solution \( \mbox{\mathversion{bold}$x$}(s,t)\)
of problem {\rm (\ref{ring})} satisfying
\( |\mbox{\mathversion{bold}$x$}_{s}(s,t)|=1 \) for all
\( s\in \mathbf{T}\) and \( t>0\),
\begin{align*}
\mbox{\mathversion{bold}$x$}\in \bigcap ^{2}_{j=0}C^{j}\big([0,\infty);H^{4-2j}(\mathbf{T})\big), \quad \text{and}
\qquad 
\mbox{\mathversion{bold}$x$}_{s}\in L^{\infty}\big( 0,\infty; H^{3}(\mathbf{T})\big),
\end{align*}
which implies that \( \mbox{\mathversion{bold}$\varphi $}(s,t)=
\mbox{\mathversion{bold}$x$}(s,t)-\mbox{\mathversion{bold}$x$}^{R}(s,t) \)
satisfies {\rm (\ref{ringpert})},
\begin{align*}
\mbox{\mathversion{bold}$\varphi$}\in \bigcap ^{2}_{j=0}C^{j}\big([0,\infty);H^{4-2j}(\mathbf{T})\big), \quad \text{and}
\qquad 
\mbox{\mathversion{bold}$\varphi$}_{s}\in L^{\infty}\big( 0,\infty; H^{3}(\mathbf{T})\big).
\end{align*}
\label{ringex}

\end{Th}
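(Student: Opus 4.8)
The plan is to deduce Theorem~\ref{ringex} from the time-global solvability of problem~(\ref{ring}) established by Tani and Nishiyama in \cite{30}, together with two elementary observations: that the flow preserves the arclength parametrization, and that passing to the difference with the explicit circular solution \( \mbox{\mathversion{bold}$x$}^{R}\) costs no regularity. Concretely, I would first invoke \cite{30} to obtain, for the initial datum \( \mbox{\mathversion{bold}$x$}^{R}_{0}+\mbox{\mathversion{bold}$\varphi$}_{0}\in H^{4}(\mathbf{T})\), a unique time-global solution \( \mbox{\mathversion{bold}$x$}\) of (\ref{ring}) lying in \( \bigcap_{j=0}^{2}C^{j}([0,\infty);H^{4-2j}(\mathbf{T}))\) with \( \mbox{\mathversion{bold}$x$}_{s}\in L^{\infty}(0,\infty;H^{3}(\mathbf{T}))\). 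If one wanted a self-contained argument instead of quoting \cite{30}, it would mirror \cite{23}: local well-posedness in \( H^{4}\) followed by global a priori control of \( \mbox{\mathversion{bold}$v$}:=\mbox{\mathversion{bold}$x$}_{s}\), which solves \( \mbox{\mathversion{bold}$v$}_{t}=\mbox{\mathversion{bold}$v$}\times\mbox{\mathversion{bold}$v$}_{ss}\) on \( \mathbf{T}\), by means of the conserved quantities \( \|\mbox{\mathversion{bold}$v$}_{ss}\|^{2}-\frac{5}{4}\||\mbox{\mathversion{bold}$v$}_{s}|^{2}\|^{2}\) and its higher-order analogue from Section~3 (still conserved on \( \mathbf{T}\), since integration by parts produces no boundary terms), combined with \( \|\mbox{\mathversion{bold}$v$}\|^{2}\equiv L\) and a Poincar\'e--Wirtinger inequality.

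Next I would verify that the solution stays arclength-parametrized. Differentiating the equation in \( s\) gives \( \mbox{\mathversion{bold}$x$}_{st}=\mbox{\mathversion{bold}$x$}_{s}\times\mbox{\mathversion{bold}$x$}_{sss}\), so \( \partial_{t}|\mbox{\mathversion{bold}$x$}_{s}|^{2}=2\mbox{\mathversion{bold}$x$}_{s}\cdot\mbox{\mathversion{bold}$x$}_{st}=0\) pointwise in \( s\), and since \( |\mbox{\mathversion{bold}$x$}^{R}_{0s}+\mbox{\mathversion{bold}$\varphi$}_{0s}|=1\) by hypothesis we conclude \( |\mbox{\mathversion{bold}$x$}_{s}(s,t)|\equiv 1\). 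Then, observing that \( \mbox{\mathversion{bold}$x$}^{R}(s,t)={}^{t}(R\cos\frac{s}{R},R\sin\frac{s}{R},\frac{t}{R})\) is \( C^{\infty}\) in \( (s,t)\) and \( L\)-periodic in \( s\) (here \( L=2\pi R\)), it follows that \( \mbox{\mathversion{bold}$\varphi$}:=\mbox{\mathversion{bold}$x$}-\mbox{\mathversion{bold}$x$}^{R}\) lies in exactly the same function-space class as \( \mbox{\mathversion{bold}$x$}\), and \( \mbox{\mathversion{bold}$\varphi$}_{s}\) in the same class as \( \mbox{\mathversion{bold}$x$}_{s}\). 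Finally, substituting \( \mbox{\mathversion{bold}$x$}=\mbox{\mathversion{bold}$x$}^{R}+\mbox{\mathversion{bold}$\varphi$}\) into \( \mbox{\mathversion{bold}$x$}_{t}=\mbox{\mathversion{bold}$x$}_{s}\times\mbox{\mathversion{bold}$x$}_{ss}\), expanding the cross product, and cancelling the terms that reconstitute \( \mbox{\mathversion{bold}$x$}^{R}_{t}=\mbox{\mathversion{bold}$x$}^{R}_{s}\times\mbox{\mathversion{bold}$x$}^{R}_{ss}\) (valid because \( \mbox{\mathversion{bold}$x$}^{R}\) is an exact solution of the LIE) produces precisely (\ref{ringpert}).

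The genuine analytic content is confined to the first step and is imported from \cite{30}; the remainder is bookkeeping. The only point requiring care, if one chose to reprove that step rather than quote it, is closing the uniform-in-time \( H^{3}\) bound for \( \mbox{\mathversion{bold}$x$}_{s}\) in the periodic setting: one must check that the higher-order conserved quantities used in Section~3 survive replacing the bounded interval by \( \mathbf{T}\), and that the accompanying Poincar\'e--Wirtinger and interpolation estimates still close — which they do, since on \( \mathbf{T}\) the function \( \mbox{\mathversion{bold}$x$}_{ss}=\mbox{\mathversion{bold}$v$}_{s}\) automatically has zero mean.
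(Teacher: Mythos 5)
Your proposal matches the paper's treatment exactly: the paper simply cites Tani and Nishiyama \cite{30} for the time-global solvability of problem (\ref{ring}) and states the theorem as an immediate consequence, with the arclength preservation and the passage to \( \mbox{\mathversion{bold}$\varphi$} \) regarded as routine bookkeeping of the kind you spell out. Your additional sketch of how one would reprove the existence via the conserved quantities on \( \mathbf{T} \) is a reasonable elaboration but is not needed for, nor present in, the paper's argument.
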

Next, we establish what kind of initial perturbation we consider.
We define the following notation.
\begin{df}
Let \( k \geq 3 \) be an integer.
For an initial perturbation \( \mbox{\mathversion{bold}$\varphi$}_{0}
\in H^{4}(\mathbf{T}) \), we say that \( \mbox{\mathversion{bold}$\varphi$}_{0}\)
satisfies the \( k\)-reflective property if there exists
\( 0\leq s_{1}<s_{2}< \cdots < s_{k}<L \) such that the following holds.
\begin{description}
\item[\quad (i)] \( \mbox{\mathversion{bold}$\varphi$}_{0s}(s_{j}) 
=\mbox{\mathversion{bold}$0$} \) for all \( j\in \{ 1,2, \ldots , k\}\).
\item[\quad (ii)] For \( j\in \{ 1,2, \ldots , k\}\), denote
\( I_{j}=[s_{j},s_{j+1}]\), where \( s_{k+1}=s_{1}+L \), and we also denote
\( I_{k+1}=[s_{1}+L, s_{2}+L]\).
For all \( j\in \{ 2, 3, \ldots , k+1\}\), 
\( \mbox{\mathversion{bold}$\varphi$}_{0}\big|_{I_{j-1}\cup I_{j}}\) has 
reflection symmetry with respect to the plane perpendicular to 
\( {}^{t}(-\sin (\frac{s_{j}}{R}), \cos(\frac{s_{j}}{R}),0 ) \).
Here, \( \mbox{\mathversion{bold}$\varphi$}_{0}\big|_{I_{j-1}\cup I_{j}}\) is the 
restriction of \( \mbox{\mathversion{bold}$\varphi$}_{0} \) to 
two adjacent intervals.

\end{description}

\label{krefl}
\end{df}
Note that the definition of the \( k\)-reflective property necessitates that 
each interval \( I_{j}\) has the same length.
The above definition also implies that if 
\( \mbox{\mathversion{bold}$\varphi$}_{0}\)
satisfies the \( k\)-reflective property, the perturbed initial filament
\( \mbox{\mathversion{bold}$x$}_{0}= \mbox{\mathversion{bold}$x$}^{R}_{0}+\mbox{\mathversion{bold}$\varphi$}_{0}\)
satisfies (ii) of Definition \ref{krefl}. In other words,
\( \mbox{\mathversion{bold}$x$}_{0}\big|_{I_{j-1}\cup I_{j}} \) has reflection symmetry with respect to the 
plane perpendicular to 
\( {}^{t}(-\sin (\frac{s_{j}}{R}), \cos(\frac{s_{j}}{R}),0 ) \)
for all \( j\in \{ 2, 3, \ldots , k+1\}\). Furthermore, 
we see that 
\begin{align*}
 \mbox{\mathversion{bold}$x$}_{0s}(s_{j})=
{}^{t}\big( -\sin (\frac{s_{j}}{R}), \cos(\frac{s_{j}}{R}),0 \big)
\end{align*}
for all \( j\in \{ 1,2, \ldots , k\}\).

This means that, if \( \mbox{\mathversion{bold}$\varphi$}_{0}\) satisfies the 
\( k\)-reflective property, problem (\ref{ring}) can be divided into 
segments. Namely, consider the auxiliary problem for 
\( \mbox{\mathversion{bold}$y$}^{j}\) given by
\begin{align*}
\left\{
\begin{array}{ll}
\mbox{\mathversion{bold}$y$}^{j}_{t}=\mbox{\mathversion{bold}$y$}^{j}_{s}\times
\mbox{\mathversion{bold}$y$}^{j}_{ss}, & s\in I_{j}, \ t>0, \\[3mm]
\mbox{\mathversion{bold}$y$}^{j}(s,0)=\mbox{\mathversion{bold}$x$}_{0}(s), &
s\in I_{j}, \\[3mm]
\mbox{\mathversion{bold}$y$}^{j}_{s}(s_{j},t)
=\mbox{\mathversion{bold}$b$}^{j}, & t>0, \\[3mm]
\mbox{\mathversion{bold}$y$}_{s}^{j}(s_{j+1},t)
=\mbox{\mathversion{bold}$b$}^{j+1}, & t>0,
\end{array}\right.
\end{align*}
where \( \mbox{\mathversion{bold}$b$}^{j}
={}^{t}\big( -\sin (\frac{s_{j}}{R}), \cos(\frac{s_{j}}{R}),0 \big) \).
Note that the reflection symmetry of
\( \mbox{\mathversion{bold}$\varphi$}_{0} \) ensures that the necessary 
compatibility conditions for the above initial-boundary value problem 
are met.
Similar arguments as in the proof of Lemma \ref{raw} show that 
\begin{align*}
\mbox{\mathversion{bold}$x$}(s,t)=
\left\{
\begin{array}{ll}
\mbox{\mathversion{bold}$y$}^{1}(s,t), & \text{if} \ s\in I_{1}, \ t>0, \\[3mm]
\mbox{\mathversion{bold}$y$}^{2}(s,t), & \text{if} \ s\in I_{2}, \ t>0, \\[3mm]
\ \ \ \vdots & \ \\[3mm]
\mbox{\mathversion{bold}$y$}^{k}(s,t), & \text{if} \ s\in I_{k}, \ t>0, \\[3mm]
\end{array}\right.
\end{align*}
is the solution of problem (\ref{ring}). Hence, by applying 
the estimates obtained in Theorem \ref{th1} to each 
\( \mbox{\mathversion{bold}$y$}^{j}\), we obtain the following.
\begin{Th}
Let \( k\geq 3\) be an integer. For any \( \mbox{\mathversion{bold}$\varphi$}_{0}
\in H^{4}(\mathbf{T})\) satisfying
\( |\mbox{\mathversion{bold}$x$}^{R}_{0s}(s)+
\mbox{\mathversion{bold}$\varphi$}_{0s}(s)|=1\) for all \( s\in \mathbf{T}\)
and the \( k\)-reflective property, there exists 
\( C>0\) such that \( \mbox{\mathversion{bold}$\varphi$}
={}^{t}(\varphi_{1},\varphi_{2},\varphi_{3}) \)
given in Theorem {\rm \ref{ringex}} satisfies the following.
\begin{description}
\item[\quad (i)] For any \( t>0 \), 
\begin{align*}
\| {}^{t}(\varphi_{1}(t),
\varphi_{2}(t))\| + 
\|\mbox{\mathversion{bold}$\varphi $}_{s}(t)\|_{1}\leq C
\| \mbox{\mathversion{bold}$\varphi $}_{0ss}\|
\end{align*}
holds. 
\item[\quad (ii)] For any \( t>0 \),
\begin{align*}
\|\mbox{\mathversion{bold}$\varphi $}_{sss}(t)\|_{1} \leq 
C(\|\mbox{\mathversion{bold}$\varphi $}_{0ss}\|_{2} + \|\mbox{\mathversion{bold}$\varphi $}_{0ss}\|_{2}^{3})
\end{align*}
holds. 
\item[\quad (iii)] For any \( t>0 \),
\begin{align*}
\| \varphi_{3}(t) \| 
\leq C(\|\varphi_{0,3}\|+ \|\mbox{\mathversion{bold}$\varphi $}_{0ss}\|
+\|\mbox{\mathversion{bold}$\varphi $}_{0ss}\|^{2}t )
\end{align*}
holds.
\end{description}
\label{ringstab}
\end{Th}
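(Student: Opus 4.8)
The plan is to deduce Theorem~\ref{ringstab} from Theorem~\ref{th1} by splitting the perturbed circular filament into $k$ arc-shaped pieces, exactly along the lines already indicated above. Given an initial perturbation $\boldsymbol{\varphi}_{0}$ with the $k$-reflective property, one forms the $k$ auxiliary initial--boundary value problems for $\mathbf{y}^{1},\dots,\mathbf{y}^{k}$ displayed above, and first checks that the concatenation of the $\mathbf{y}^{j}$ is the global solution $\mathbf{x}$ of (\ref{ring}): as in the proof of Lemma~\ref{raw}, one extends $\mathbf{y}^{j}$ by reflection across the plane perpendicular to $\mathbf{b}^{j+1}$; by the reflection symmetry of $\mathbf{x}_{0}|_{I_{j}\cup I_{j+1}}$ the initial datum of this extension is $\mathbf{x}_{0}|_{I_{j}\cup I_{j+1}}$ and its boundary tangents are $\mathbf{b}^{j}$ and $\mathbf{b}^{j+2}$, so by uniqueness the extension coincides with the genuine two-interval solution, which forces $\mathbf{y}^{j}$ and $\mathbf{y}^{j+1}$ to match in $H^{4}$ at $s_{j+1}$. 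Running this around $\mathbf{T}$ yields a field in the regularity class of Theorem~\ref{ringex} solving (\ref{ring}), which by uniqueness is $\mathbf{x}$; in particular $\boldsymbol{\varphi}|_{I_{j}}=\mathbf{y}^{j}-\mathbf{x}^{R}|_{I_{j}}=:\boldsymbol{\varphi}^{j}$. It then remains to recognize each $\mathbf{y}^{j}$-problem as an instance of (\ref{slant3}) and to reassemble the estimates.

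For the identification: the $k$ planes perpendicular to the $\mathbf{b}^{j}$ all contain the $\xi_{3}$-axis, and, reparametrized by $\sigma=s-s_{j}$, $\mathbf{x}^{R}|_{I_{j}}$ is the image of the standard travelling arc ${}^{t}(R\cos(\sigma/R),R\sin(\sigma/R),t/R)$ under the rotation by $s_{j}/R$ about the $\xi_{3}$-axis. Since the Localized Induction Equation is equivariant under rotations, applying the rotation by $-s_{j}/R$ about $\xi_{3}$ turns the $\mathbf{y}^{j}$-problem into problem (\ref{slant3}) with $\theta$ replaced by $\theta_{0}:=L/(kR)=2\pi/k$ and with the (rotated) restriction of $\boldsymbol{\varphi}_{0}$ as initial perturbation. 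The hypothesis $k\ge 3$ gives $\theta_{0}\in(0,\pi)$, which is precisely the range in which Theorem~\ref{th1} is available, so it only remains to verify Assumption~\ref{as} for the rotated restriction: (A1) is inherited from the global normalization $|\mathbf{x}^{R}_{0s}+\boldsymbol{\varphi}_{0s}|=1$; the $0$-th order part of (A2) is (i) of Definition~\ref{krefl}, while the $1$-st order part, $\mathbf{x}_{0s}\times\mathbf{x}_{0sss}|_{s_{j}}=\mathbf{0}$, follows because the reflection symmetry of $\mathbf{x}_{0}$ across the plane perpendicular to $\mathbf{b}^{j}$ forces all odd $s$-derivatives of $\mathbf{x}_{0}$ at the fixed point $s_{j}$ to be parallel to $\mathbf{b}^{j}=\mathbf{x}_{0s}(s_{j})$; and (A3) holds because that same symmetry places $\mathbf{x}_{0}(s_{j})$ --- hence $\boldsymbol{\varphi}_{0}(s_{j})$, since $\mathbf{x}^{R}_{0}(s_{j})\cdot\mathbf{b}^{j}=0$ --- on the reflection plane.

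With this in place one applies Theorem~\ref{th1} to $\boldsymbol{\varphi}^{j}=\boldsymbol{\varphi}|_{I_{j}}$ for each $j$. Because the rigid motion involved is a rotation about the $\xi_{3}$-axis, it fixes $\mathbf{e}_{3}$ and the distinguished axis direction of every piece is the common $\xi_{3}$-axis, so estimate (iii) of Theorem~\ref{th1} transfers to the component $\varphi^{j}_{3}$ without alteration. One then reassembles: for any $f$ one has $\|f\|_{L^{2}(\mathbf{T})}^{2}=\sum_{j=1}^{k}\|f\|_{L^{2}(I_{j})}^{2}$, and likewise for the higher $H^{m}$ norms, so summing the squares of the local estimates of Theorem~\ref{th1}(i)--(iii) and using $\sum_{j}\|\boldsymbol{\varphi}^{j}_{0ss}\|_{L^{2}(I_{j})}^{2}=\|\boldsymbol{\varphi}_{0ss}\|_{L^{2}(\mathbf{T})}^{2}$ together with the elementary bound $\sum_{j}a_{j}^{2}\le(\sum_{j}a_{j})^{2}$ to absorb the higher powers hidden in (ii) and (iii) produces (i), (ii), (iii) over $\mathbf{T}$ with a constant $C$ depending only on $R$ and $k$ (through $\theta_{0}$).

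The only step that is not purely mechanical is the decomposition in the first paragraph, and even there the mechanism is already supplied by Lemma~\ref{raw}: one must be certain that the reflection symmetry across each dividing plane is propagated by the flow and that it furnishes the otherwise-missing $1$-st order compatibility at each junction, so that the $k$ pieces are legitimate arc problems and the pieced-together field has the regularity of Theorem~\ref{ringex}. Granting this, the rotation bookkeeping and the segment-wise summation are routine. Accordingly I would present the argument as a reduction lemma (circle $\leftrightarrow$ $k$ arc problems, via the $k$-reflective property and the symmetry-propagation of Lemma~\ref{raw}), a short verification of Assumption~\ref{as} on each piece, and the one-line assembly above.
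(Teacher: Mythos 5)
Your proposal is correct and follows essentially the same route as the paper: the paper likewise uses the $k$-reflective property to split (\ref{ring}) into $k$ arc problems for $\mbox{\mathversion{bold}$y$}^{j}$, invokes the symmetry-propagation mechanism of Lemma \ref{raw} to identify the concatenation with the solution of (\ref{ring}), and applies Theorem \ref{th1} to each segment. Your write-up merely makes explicit some steps the paper leaves implicit (verification of Assumption \ref{as} on each piece, the rotation to standard position with $\theta_{0}=2\pi/k\in(0,\pi)$, and the summation of the local estimates), all of which check out.
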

We can further derive the following theorem from Theorem \ref{thopt}.
\begin{Th}
The estimate in {\rm (iii)} of Theorem {\rm \ref{ringstab}}
 is optimal in the following
sense. There exists \( C>0\) such that
for any integer \( n\geq 2\), the solution 
\( \mbox{\mathversion{bold}$x $}_{n}\) of problem {\rm (\ref{ring})} with
the initial perturbation 
given by
\begin{align*}
\mbox{\mathversion{bold}$\varphi $}_{n}(s)=
{}^{t}\big( R_{n}\cos (\frac{s}{R_{n}})-R\cos (\frac{s}{R}), \
R_{n}\sin (\frac{s}{R_{n}}) - R\sin(\frac{s}{R}), \
0 \big),
\end{align*}
where \( \displaystyle R_{n}= \frac{R}{n} \), satisfies
\begin{align*}
| x_{n,3}(s,t)-x^{R}_{3}(s,t) |
 = Cnt
\end{align*}
for all \( s\in \mathbf{T}\) and \( t\geq 0\).
\label{ringthopt}

\end{Th}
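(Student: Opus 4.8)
The plan is to adapt the construction from the proof of Theorem~\ref{thopt} to the closed setting: for each integer $n\ge 2$ I exhibit an explicit closed solution of (\ref{ring}) obtained by winding the circle of radius $R$ exactly $n$ times onto the concentric circle of radius $R_{n}=R/n$, namely
\begin{align*}
\mbox{\mathversion{bold}$x$}_{n}(s,t)={}^{t}\Big(R_{n}\cos\big(\frac{s}{R_{n}}\big),\ R_{n}\sin\big(\frac{s}{R_{n}}\big),\ \frac{t}{R_{n}}\Big),\qquad R_{n}=\frac{R}{n}.
\end{align*}
First I would verify that this is a genuine solution on $\mathbf{T}=\mathbf{R}/[0,L]$. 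Since $s$ is arc length, $L=2\pi R$, so $L/(2\pi R_{n})=R/R_{n}=n\in\mathbf{Z}$ and both $\mbox{\mathversion{bold}$x$}_{n}(\cdot,t)$ and $\mbox{\mathversion{bold}$\varphi$}_{n}:=\mbox{\mathversion{bold}$x$}_{n}(\cdot,0)-\mbox{\mathversion{bold}$x$}^{R}_{0}$ are genuinely $L$-periodic and smooth, hence in $H^{4}(\mathbf{T})$; a one-line cross-product computation gives $\mbox{\mathversion{bold}$x$}_{n,s}\times\mbox{\mathversion{bold}$x$}_{n,ss}={}^{t}(0,0,1/R_{n})=\mbox{\mathversion{bold}$x$}_{n,t}$, so $\mbox{\mathversion{bold}$x$}_{n}$ solves the LIE, and $|\mbox{\mathversion{bold}$x$}_{n,s}|\equiv 1$. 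Since $\mbox{\mathversion{bold}$\varphi$}_{n}$ is exactly the perturbation named in the statement and $|\mbox{\mathversion{bold}$x$}^{R}_{0s}+\partial_{s}\mbox{\mathversion{bold}$\varphi$}_{n}|=|\mbox{\mathversion{bold}$x$}_{n,s}|=1$, the hypotheses of Theorem~\ref{ringex} are met, and by the uniqueness assertion there $\mbox{\mathversion{bold}$x$}_{n}$ is the solution of (\ref{ring}) with this initial datum.

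Once the solution is identified the conclusion is immediate: $x_{n,3}(s,t)-x^{R}_{3}(s,t)=t/R_{n}-t/R=(n-1)t/R$ for every $s$ and $t$, which grows linearly in $t$ with coefficient $(n-1)/R$; since $n/(2R)\le(n-1)/R\le n/R$ for $n\ge 2$, this yields $|x_{n,3}(s,t)-x^{R}_{3}(s,t)|\ge Cnt$ with, say, $C=1/(2R)$ (the exact value being $(n-1)t/R\asymp nt$). As $\|\partial_{s}^{2}\mbox{\mathversion{bold}$\varphi$}_{n}\|$ is finite for each fixed $n$ while the growth coefficient tends to $\infty$ with $n$, this shows that the linear-in-$t$ term in statement (iii) of Theorem~\ref{ringstab} cannot be removed and that its rate is unbounded over the admissible perturbations, which is precisely the assertion of Theorem~\ref{ringthopt}.

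The only point needing a little care is to check that the $\mbox{\mathversion{bold}$\varphi$}_{n}$ really lie in the class covered by Theorem~\ref{ringstab}, i.e.\ satisfy a $k$-reflective property with $k\ge 3$. Here I would observe that $\partial_{s}\mbox{\mathversion{bold}$\varphi$}_{n}(s)$ vanishes exactly at the $n-1$ equally spaced points $s_{j}=2\pi R(j-1)/(n-1)$, $j=1,\dots,n-1$, in $[0,L)$, because the tangent vectors $\mbox{\mathversion{bold}$x$}^{R}_{0s}(s)$ and $\partial_{s}\mbox{\mathversion{bold}$x$}_{n}(s,0)$ agree precisely when $s/R_{n}\equiv s/R\pmod{2\pi}$. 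Rotating coordinates about the $\xi_{3}$-axis by the angle $s_{j}/R$, one checks that $\mbox{\mathversion{bold}$\varphi$}_{n}(s_{j}+u)$ has, in the rotated frame, an even first component, an odd second component, and a vanishing third component as a function of $u$; translating back, the component of $\mbox{\mathversion{bold}$\varphi$}_{n}$ along $\mbox{\mathversion{bold}$b$}^{j}={}^{t}(-\sin(s_{j}/R),\cos(s_{j}/R),0)$ is odd about $s_{j}$ while its components in the plane perpendicular to $\mbox{\mathversion{bold}$b$}^{j}$ are even about $s_{j}$, which is exactly the reflection symmetry required in Definition~\ref{krefl}. Hence $\mbox{\mathversion{bold}$\varphi$}_{n}$ has the $(n-1)$-reflective property whenever $n\ge 4$, so the family $\{\mbox{\mathversion{bold}$x$}_{n}\}$ already demonstrates optimality inside the exact hypotheses of Theorem~\ref{ringstab}, while the cases $n=2,3$ are covered directly by Theorem~\ref{ringex}. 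I expect the main obstacle to be purely bookkeeping — tracking the angular coincidence of the two circles at the points $s_{j}$ and the ensuing reflection symmetry — with no analytic estimates needed anywhere.
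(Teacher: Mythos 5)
Your proposal is correct, and it reaches the conclusion by a genuinely more direct route than the paper. The paper's proof is a two-line reduction: it asserts that \( \mbox{\mathversion{bold}$\varphi$}_{n}\) satisfies a \(k\)-reflective property, divides problem (\ref{ring}) into \(k\) arc segments as in the discussion preceding Theorem \ref{ringstab}, and applies the arc-optimality Theorem \ref{thopt} to each segment. You instead write down the explicit global solution \( \mbox{\mathversion{bold}$x$}_{n}(s,t)={}^{t}(R_{n}\cos(s/R_{n}),R_{n}\sin(s/R_{n}),t/R_{n})\) on \( \mathbf{T}\) (noting \(L/(2\pi R_{n})=n\in\mathbf{Z}\) so it is genuinely periodic), verify it solves the LIE, and invoke the uniqueness assertion of Theorem \ref{ringex}; the growth \(x_{n,3}-x^{R}_{3}=(n-1)t/R\) then falls out. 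This bypasses the segmentation machinery entirely and, as a bonus, covers \(n=2,3\) uniformly, whereas the segmentation route needs \(k\ge 3\) segments. Your separate verification that \( \mbox{\mathversion{bold}$\varphi$}_{n}\) does lie in the class of Theorem \ref{ringstab} is the right thing to check for the word ``optimal'' to be meaningful, and your count is the correct one: \( \partial_{s}\mbox{\mathversion{bold}$\varphi$}_{n}\) vanishes exactly where \(s/R_{n}\equiv s/R \pmod{2\pi}\), i.e.\ at \(n-1\) equally spaced points, so the perturbation is \((n-1)\)-reflective (for \(n\ge 4\)); the paper's proof asserts the \(3n\)-reflective property, which does not match this count and appears to be a slip. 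Finally, you are right to flag that the literal equality \(|x_{n,3}-x^{R}_{3}|=Cnt\) with \(C\) independent of \(n\) is not what the computation yields --- it yields \((n-1)t/R\) --- and your reading of the statement as \(\gtrsim nt\) with, say, \(C=1/(2R)\) is the only sensible one; the paper's own proof has exactly the same feature and does not comment on it. No gaps.
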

\begin{proof}
Since \( \mbox{\mathversion{bold}$\varphi $}_{n} \) satisfies the 
\( 3n\)-reflective property, we can divide the problem into 
\( 3n\) segments. We then apply Theorem \ref{thopt} to each segment to
obtain Theorem \ref{ringthopt}.
\end{proof}

%
%---------------------------------------------------------------------------------
%---------------------------------------------------------------------------------
%
%
%
\section{Concluding Remarks}
\setcounter{equation}{0}
We make some concluding remarks and compare our results to the 
behaviour of a vortex ring.
\subsection{On Theorem \ref{thopt}}
Theorem \ref{thopt} shows that the estimate for 
\( \varphi_{3}\) given in Theorem \ref{th1} is optimal. 
To show this, we have constructed an 
explicit initial perturbation which causes the perturbation to grow 
linearly with respect to time. The initial perturbation 
exploits the fact that the filament can overlap with itself, as well as intersect with 
other objects without any consequences. Although this kind of 
perturbation is within the assumptions of Theorem \ref{th1}, 
it is technical in nature and doesn't give much insight
on the motion of a thin vortex structure in the context of the 
Euler equation, which a vortex filament is meant to describe. 
This is partly due to the fact that stability estimates for 
\( \varphi_{1}\) and \( \varphi_{2}\) hold for a broad range of
perturbations. In fact, the assumptions of Theorem \ref{th1} are
mostly necessary conditions, such as compatibility conditions
and conditions that are needed to keep the problem setting 
consistent. Hence, a wide range of initial perturbations,
even ones that are not physically relevant, are included.

Given these situations, it would be interesting to see if the 
linear growth with respect to time is still optimal under 
restrictions on the initial perturbation 
which keeps the shape of the perturbed filament 
relevant from the perspective of the Euler equation.

One such possibility is to restrict the size of the perturbation to be 
small, which would result in the perturbed filament to remain close 
in shape to an arc. 
Analyzing the detailed shape of a vortex filament in general is 
a difficult problem, and is something the author would like to continue investigating
in the future.

%
%
%---------------------------------------------------------------------------------
%
%
\subsection{On the Stability of Circular Vortex Filaments}
Regarding the stability of circular vortex filaments, 
the same stability estimates and optimality as the arc-shaped filament 
is proved under the assumption that the 
initial perturbation satisfies the \( k\)-reflective property.
Even under this fairly strong restriction, the estimate 
for \( \varphi_{3}\) is optimal. Like in the case of the 
arc-shaped filament, the explicit perturbation 
used to prove the optimality has much to be desired, and the author would like to 
further investigate the stability of circular vortex filaments 
under perturbations which result in perturbed filaments with 
physically relevant shapes.

%
%
%---------------------------------------------------------------------------------
%
%
\subsection{Comparison with Known Results on the Euler Equation}
The stability estimates we obtained for the arc-shaped filament and 
circular filament suggest that similar behaviour may occur for the 
corresponding solutions of the Euler equation.
This is the case for certain types of vortex rings.
Choi \cite{79} and Choi and Jeong \cite{78} prove that a 
vortex ring is orbitally stable as the solution of the axisymmetric 
Euler equation. Furthermore, they prove that the perturbation 
in the translational direction, in which the ring travels, grows at most 
linearly with respect to \( t\). They also prove that this estimate is
optimal under their problem setting. This result is in direct correspondence 
with our result. Although the vortex ring considered in \cite{79} and \cite{78}
is considered a thick vortex ring, which is not a type of vortex structure
a vortex filament is meant to describe, it is interesting that 
such a clear correspondence can be observed.
%
%
%
%---------------------------------------------------------------------------------
%
%
\subsection{Possible Applications}
Jerrard and Seis \cite{85} prove that, given an initial smooth closed curve, 
the solution of the LIE starting from this initial curve is close,
in an appropriate sense, to the solution of the 
three-dimensional Euler equation where the initial flow is 
given by a vortex concentrated near the initial curve.
This is the first work which proved the validity of the LIE as a
model equation in the framework of the Euler equation.
More recently, D\'{a}vila, del Pino, Musso, and Wei \cite{86} proved 
a similar result for the helical vortex filament.

These results suggest that analyzing the motion of a vortex filament described 
by the LIE in detail can give information on the behaviour of the solution of the
three-dimensional Euler equation. This is another motivation for 
considering the motion of a vortex filament with physically relevant shapes.

\section*{Acknowledgements}
This work was supported in part by JSPS Grant-in-Aid for Early-Career
Scientists grant number 20K14348.

\vspace*{1cm}
\noindent
Masashi Aiki\\
Department of Mathematics\\
Faculty of Science and Technology, Tokyo University of Science\\
2641 Yamazaki, Noda, Chiba 278-8510, Japan\\
E-mail: a27120\verb|@|rs.tus.ac.jp

\end{document}